\let\@fnsymbol\@arabic
\definecolor{Gray}{gray}{0.80}
\definecolor{LightGray}{gray}{0.90}
\newcommand{\cA}{\mathcal{A}}
\newcommand{\cC}{\mathcal{C}}
\newcommand{\cD}{\mathcal{D}}
\newcommand{\cL}{\mathcal{L}}
\newcommand{\cP}{\mathcal{P}}
\newcommand{\bR}{\mathbb{R}}
\newcommand{\bfH}{\mathbf{H}}
\newcommand{\PR}{\mathbb{P}}
\newcommand{\dd}{ \mathrm{d}}
\DeclareMathOperator*{\LIM}{LIM} 
\DeclareMathOperator*{\subLIM}{subLIM}
\DeclareMathOperator*{\superLIM}{superLIM}
\renewcommand{\epsilon}{\varepsilon}
\newcommand{\vn}[1]{\left| \! \left| #1\right| \! \right|}
\numberwithin{equation}{section}
\newtheorem{theorem}{Theorem}[section]
\newtheorem{lemma}[theorem]{Lemma}
\newtheorem{proposition}[theorem]{Proposition}
\theoremstyle{definition}
\newtheorem{definition}[theorem]{Definition}
\newtheorem*{remark*}{Remark}
\newtheorem{assumption}[theorem]{Assumption}
\newtheorem{condition}[theorem]{Condition}
\newcommand{\N}{\mathbb{N}}
\newcommand{\R}{\mathbb{R}}
\newcommand{\Gs}{G}
\newcommand{\Hs}{H}
\newcommand{\Hf}{\Gamma}
\newcommand{\Kf}{\Lambda}
\title{Path-space moderate deviations for a Curie-Weiss model of self-organized criticality} 
\author{Francesca Collet\thanks{Delft Institute of Applied Mathematics, Delft University of Technology, Mekelweg 4, 2628 CD Delft (The Netherlands). \emph{E-mail address}: f.collet-1@tudelft.nl} \and Matthias Gorny\thanks{Laboratoire de Math{\'e}matiques, Universit{\'e} Paris-Sud, Bâtiment 425, 
91405 Orsay Cedex (France). \emph{E-mail address}: matthias.gorny@math.u-psud.fr } \and Richard C. Kraaij\thanks{Fakultät für Mathematik, Ruhr-University of Bochum, Postfach 102148, 44721 Bochum (Germany). \emph{E-mail address}: Richard.Kraaij@rub.de}}
\date{}
\begin{document}

\maketitle

\begin{abstract}
	
	\noindent The dynamical Curie-Weiss model of self-organized criticality (SOC) was introduced in \cite{Gor17} and it is derived from the classical generalized Curie-Weiss by imposing a microscopic Markovian evolution having the distribution of the Curie-Weiss model of SOC \cite{CeGo16} as unique invariant measure. In the case of Gaussian single-spin distribution, we analyze the dynamics of moderate fluctuations for the magnetization. We obtain a path-space moderate deviation principle via a general analytic approach based on convergence of non-linear generators and uniqueness of viscosity solutions for associated Hamilton-Jacobi equations. Our result shows that, under a peculiar moderate space-time scaling and without tuning external parameters, the typical behavior of the magnetization is critical. \\
	
	\noindent \emph{Keywords:} moderate deviations $\cdot$  interacting particle systems $\cdot$ mean-field interaction $\cdot$ self-organized criticality $\cdot$ Hamilton–Jacobi equation $\cdot$ perturbation theory for Markov processes
\end{abstract}

\section{Introduction}

In their very well-known article \cite{BaTaWi87}, Bak, Tang and Wiesenfeld showed that certain large dynamical systems have the tendency to organize themselves into a critical state, without any external intervention. The amplification of small internal fluctuations can lead to a critical state and cause a chain reaction leading to a radical change of the system behavior. These systems exhibit the phenomenon of self-organized criticality (SOC) that since its introduction has been successfully applied to describe quite a number of
natural phenomena (e.g., forest fires, earthquakes, species evolution). Indeed, it has been conjectured that living systems self-organize by putting themselves in a state which is close to criticality.
In general, features of SOC have been observed empirically or simulated on a computer in various models; however, the mathematical analysis turns out to be extremely difficult, even for models whose definition is very simple  \cite{RaTo09,BaSn93,MeSa12}. Self-organized criticality has been reviewed in recent works \cite{Asc13,Bak96,Dha06,Pru12,Tur99}.

The simplest models exhibiting SOC are obtained by forcing standard critical transitions into a self-organized state \cite[Sect.~15.4]{Sor06}. The idea is to start with a model presenting a phase transition and to create a feedback from the configuration to the control parameters in order to converge towards a critical state. Following this guideline, Cerf and Gorny designed an interacting particle system exhibiting self-organized criticality that is as simple as possible and is amenable to a rigorous mathematical analysis: a \emph{Curie-Weiss model of SOC} \cite{CeGo16,Gor14}. They modified the equilibrium distribution associated to the generalized Curie-Weiss model (i.e., with real-valued spins \cite{ElNe78a}) by implementing an automatic control of the inverse temperature that, in the limit as the size $n$ goes to infinity, drives the system into criticality without tuning any external parameter. Under an exponential moment condition and a symmetry assumption on the spin distribution, they proved that the magnetization behaves as in the generalized Curie-Weiss model when posed at the critical point: the fluctuations are of order $n^{\frac{3}{4}}$ and have limiting law $ \nu(x) \propto \exp( -\frac{x^4}{12}) \, \dd x$. 

More recently, Gorny approached the problem from a non-equilibrium viewpoint and constructed a \emph{dynamical Curie-Weiss model of SOC} \cite{Gor17}. He considered a Markov process whose unique invariant distribution is the Curie-Weiss model of SOC and proved, in the case of Gaussian spins, that the fluctuations evolve on a peculiar space-time scale (orders $n^{\frac{3}{4}}$, $\sqrt{n} \, t$) and their limit is the solution of a ``critical'' SDE having $\nu$ as invariant measure.

The advantage of dealing with Gaussian spins is that it is possible to find a finite-dimensional order parameter to describe the system. In particular, the problem can be reduced to a bi-dimensional problem: the Langevin spin dynamics induce a Markovian evolution on the pair $((n^{-1}S_n(t),n^{-1}T_n(t)), t \geq 0)$, with $S_n := \sum_{i=1}^n X_i$ and $T_n := \sum_{i=1}^n X_i^2$, $X_i$'s being the spin values. Therefore it suffices to analyze the behaviour of the latter observable. 

\smallskip

Our purpose is to characterize \emph{path-space moderate deviations} for the dynamical model of SOC with Gaussian spins introduced in \cite{Gor17}. A moderate deviation principle is technically a large deviation principle and consists in a refinement of a central limit theorem, in the sense that it characterizes the exponential decay of the probability of deviations from the average on a smaller scale. 

We apply the approach to large deviations by Feng-Kurtz \cite{FeKu06} to characterize the most likely behavior for the trajectories of fluctuations. The techniques are based on the convergence of Hamiltonians and well-posedness of a class of Hamilton-Jacobi equations corresponding to a limiting Hamiltonian $H$. These techniques have been recently exploited to analyze moderate fluctuations from equilibrium in the various regimes in the standard \cite{CoKr17} and the random-field version \cite{CoKr18} of the Curie-Weiss model. 
The major difference in comparison to these papers is that now we are dealing with {\em unbounded spin state space}. Nevertheless, we can implement the same strategy as in \cite{CoKr18}. We use the perturbation theory for Markov processes \cite{Kur73a,Kur73b,PaStVa77} to formally identify a limiting operator $H$ and we relax our definition of limiting operator to allow for unbounded functions in the domain. More precisely, we follow \cite{FeKu06} and introduce two Hamiltonians $H_{\dagger}$ and $H_\ddagger$, that are limiting upper and lower bounds for the sequence of Hamiltonians $H_n$, respectively. We then characterize $H$ by matching the upper and lower bound. 

From a qualitative viewpoint, we derive a {\em projected} large deviation principle. Indeed, there is a natural time-scale separation for the evolutions of the two processes $(n^{-1}S_n(t), t \geq 0)$ and \mbox{$(n^{-1}T_n(t), t \geq 0)$}: $n^{-1}T_n$ is fast and converges exponentially quickly to $\sigma^2$, the variance of the single-spin distribution, while $n^{-1}S_n$ is slow and its limiting behavior can be determined after suitably ``averaging out'' the dynamics of $n^{-1}T_n$. Corresponding to this observation, we need to prove a large deviation principle for the component $n^{-1}S_n$ only. 
Our main result shows that self-organized criticality is reflected by moderate deviations, since the rate function for the path-space moderate deviation principle retains the features of the ``critical'' evolution derived in \cite{Gor17}.\\

The outline of the paper is as follows: in Section~\ref{sect:model_and_result} we formally introduce the dynamical version of the Curie-Weiss model of SOC and we state the large deviation principle. The proof is given in Section~\ref{sect:proof}. Appendix~\ref{appendix:large_deviations_for_projected_processes} contains the mathematical tools needed to derive our large deviation principle via solving a class of associated Hamilton-Jacobi equations and it is included to make the paper self-contained. A similar version of the appendix appears also in \cite{CoKr18}.

\section{Model and main result}
\label{sct:model_and_result}

\subsection{Notation and definitions} 

Before starting with the main contents of the paper, we introduce some notation. We start with the definition of good rate-function and of large deviation principle for a sequence of random variables. 
	
\begin{definition}
Let $(X_n)_{n \in \mathbb{N}^*}$ be a sequence of random variables on a Polish space $\mathcal{X}$. Furthermore, consider a function $I : \mathcal{X} \rightarrow [0,\infty]$ and a sequence $(r_n)_{n \in \mathbb{N}^*}$ of positive numbers such that $r_n \uparrow \infty$. We say that
\begin{itemize}
\item  
the function $I$ is a \textit{good rate-function} if the set $\{x \, | \, I(x) \leq c\}$ is compact for every $c \geq 0$.
\item 
the sequence $(X_n)_{n \in \mathbb{N}^*}$ is \textit{exponentially tight} at speed $r_n$ if, for every $a \geq 0$, there exists a compact set $K_a \subseteq \mathcal{X}$ such that $\limsup_n r_n^{-1} \log \, \PR[X_n \notin K_a] \leq - a$.
\item 
the sequence $(X_n)_{n \in \mathbb{N}^*}$ satisfies the \textit{large deviation principle} with speed $r_n$ and good rate-function $I$, denoted by 
\begin{equation*}
\PR[X_n \approx a] \asymp e^{-r_n I(a)},
\end{equation*}
if, for every closed set $A \subseteq \mathcal{X}$, we have 
\begin{equation*}
\limsup_{n \uparrow \infty} \, r_n^{-1} \log \PR[X_n \in A] \leq - \inf_{x \in A} I(x),
\end{equation*}
and, for every open set $U \subseteq \mathcal{X}$, 
\begin{equation*}
\liminf_{n \uparrow \infty} \, r_n^{-1} \log \PR[X_n \in U] \geq - \inf_{x \in U} I(x).
\end{equation*}
\end{itemize}
\end{definition}

\begin{definition} 
A curve $\gamma: [0,T] \to \mathbb{R}$ is absolutely continuous if there exists a function \mbox{$g \in L^1([0,T],\bR)$} such that for $t \in [0,T]$ we have $\gamma(t) = \gamma(0) + \int_0^t g(s) \dd s$. We write $g = \dot{\gamma}$. A curve $\gamma: \bR^+ \to \mathbb{R}$ is absolutely continuous if the restriction to $[0,T]$ is absolutely continuous for every $T \geq 0$. Throughout the whole paper $\cA\cC$ will denote the set of absolutely continuous curves in $\bR$.
\end{definition}

To conclude we fix notation for some collections of function-spaces. 

\begin{definition}
Let $k \geq 1$ and $E$ a closed subset of $\mathbb{R}^d$. We will denote by 
\begin{itemize}
\item
$C_l^k(E)$ (resp. $C_u^k(E)$) the set of functions that are bounded from below (resp. above) in $E$ and are $k$ times differentiable on a neighborhood of $E$ in $\mathbb{R}^d$.
\item
$C_c^k(E)$ the set of functions that are constant outside some compact set in $E$ and are $k$ times continuously differentiable on a neighborhood of $E$ in $\mathbb{R}^d$. Finally, we set \mbox{$C_c^\infty(E) := \bigcap_k C_c^k(E)$.}
\end{itemize}
\end{definition}

\subsection{Description of the model and main result}
\label{sect:model_and_result}

Let $\rho$ be a symmetric probability measure on $\mathbb{R}$, with variance $\sigma^2$, and such that we have $\int_{\mathbb{R}} \exp(az^2) \dd \rho(z) < \infty$, for every $a \geq 0$. The \emph{generalized Curie-Weiss model} associated with $\rho$ and inverse temperature $\beta >0$ is an infinite triangular array of real-valued spin random variables $(X_n^k)_{1 \leq k \leq n}$ having joint distribution
\begin{equation}\label{eqn:generalized_CW_distribution}
\dd \mu^{\mathrm{CW}}_{n,\rho,\beta} (z_1, \dots, z_n) = \frac{1}{Z_n(\beta)} \exp \left( \frac{\beta}{2} \frac{(z_1 + \cdots + z_n)^2}{n} \right) \prod_{i=1}^n \dd\rho(z_i),
\end{equation}
where $Z_n(\beta)$ is a normalizing constant. For any $n \geq 1$, set $S_n := X_n^1+\cdots+X_n^n$. We have the following results for the asymptotics of $S_n$ (cf. \cite{ElNe78a}):
\begin{itemize}
	\item 
	If $\beta < \frac{1}{\sigma^2}$, then the fluctuations of $S_n$ are of order $\sqrt{n}$ and, in particular, $\frac{S_n}{\sqrt{n}}$ converges in law to a centered Gaussian random variable with variance $\frac{\sigma^2}{1-\beta \sigma^2}$.
	\item The point $\beta = \frac{1}{\sigma^2}$ is the critical point for the system. The fluctuations of $S_n$ become of higher order and their limit is no more Gaussian. Indeed, there exist $k \in \mathbb{N} \setminus \{0,1\}$ and $\lambda > 0$  (both depending on $\rho$), such that
	\begin{equation}\label{eqn:genCW:equi_fluctuation_limit}
	\frac{S_n}{n^{1-1/2k}} \: \xrightarrow[\: n \uparrow\infty \:]{\mathcal{L}} \: C_{k,\lambda} \exp \left( -\lambda \frac{s^{2k}}{(2k)!} \right) \, \dd s,
	\end{equation}
	where $C_{k,\lambda}$ is a normalizing constant.
\end{itemize} 

In \cite{CeGo16} the authors modified the distribution \eqref{eqn:generalized_CW_distribution} so as to build a system of interacting random variables that exhibits a phenomenon of self-organized criticality. In other words, they constructed a spin system converging to the critical state of \eqref{eqn:generalized_CW_distribution} (corresponding to $\beta=\frac{1}{\sigma^2}$) without tuning any external parameter. Based on the observation that if the spins were independent the quantity $n (z_1^2+\cdots+z_n^2)^{-1}$ would be a good estimator for $\frac{1}{\sigma^2}$ by strong law of large numbers, they decided to replace the inverse temperature $\beta$ in \eqref{eqn:generalized_CW_distribution} with $n (z_1^2+\cdots+z_n^2)^{-1}$, obtaining
\begin{equation}\label{eqn:CW_of_SOC_distribution}
\dd \mu^{\mathrm{SOC}}_{n,\rho} (z_1, \dots, z_n) =  \frac{1}{Z_n} \exp \left( \frac{1}{2} \frac{(z_1 + \cdots + z_n)^2}{z_1^2+\cdots+z_n^2} \right) \prod_{i=1}^n \dd \rho(z_i),
\end{equation}
where $Z_n$ is a normalizing constant. An infinite triangular array of real-valued spins  $(X_n^k)_{1 \leq k \leq n}$ having joint distribution \eqref{eqn:CW_of_SOC_distribution} is a {\em Curie-Weiss model of self-organized criticality}  and it indeed evolves spontaneously towards criticality. The fluctuations of $S_n$, under \eqref{eqn:CW_of_SOC_distribution}, have the same asymptotics as the critical generalized Curie-Weiss model, in the sense that they obey the same result as \eqref{eqn:genCW:equi_fluctuation_limit} with a universal exponent $k=2$.

In~\cite{Gor17} a {\em dynamical version} of the Curie-Weiss model of SOC was introduced. It consists in a Markov process, defined through a system of $n$ interacting Langevin diffusions, whose unique invariant distribution is 
\begin{equation}\label{eqn:CW_of_SOC_distribution:modification}
\dd \tilde{\mu}^{\mathrm{SOC}}_{n,\rho} (z_1, \dots, z_n) = \frac{1}{Z_n} \exp \left( \frac{1}{2} \frac{(z_1 + \cdots + z_n)^2}{z_1^2+\cdots+z_n^2+1} \right) \prod_{i=1}^n \dd \rho(z_i),
\end{equation}
where $Z_n$ is a normalizing constant. Observe that \eqref{eqn:CW_of_SOC_distribution:modification} is a slight modification of \eqref{eqn:CW_of_SOC_distribution} aimed at avoiding technical difficulties due to ill-definition of the distribution at the origin and to the non-Lipschitzianity of the coefficients of the associated Langevin diffusions. Nevertheless the distributions \eqref{eqn:CW_of_SOC_distribution} and \eqref{eqn:CW_of_SOC_distribution:modification} provide two equivalent formulations for a Curie-Weiss model of SOC (see \cite{Gor17} and references therein for further details). Now we come to the description of the dynamics we are interested in.

Let $\varphi: \mathbb{R} \to \mathbb{R}$ be an even function of class $C^{2}$ such that $\exp(2\varphi)$ is integrable over $\R$. Moreover, suppose that there exists a positive constant $c$ such that, for any $z \in \mathbb{R}$, $z\varphi'(z)\leq c(1+z^2)$. We define $\rho$ to be the probability measure having density
\[
\rho(z) = \exp(2\varphi(z))\,\left(\int_{\R}\exp(2\varphi(w))\, \dd w\right)^{-1},
\]
with respect to the Lebesgue measure on $\R$. The dynamical counterpart of the Curie-Weiss model of SOC \eqref{eqn:CW_of_SOC_distribution:modification} is an infinite triangular array of stochastic processes $(X_{n}^{k}(t),\,t\geq 0)_{1\leq k \leq n}$ such that, for all $n \geq 1$, $\left((X^{1}_{n}(t),\dots,X^{n}_{n}(t)),\,t\geq 0\right)$ is the unique solution of the following system of stochastic differential equations:
\begin{equation}\label{eqn:Langevin_dynamics}
\dd X_n^j(t)= \frac{1}{2} \left[ 2 \varphi' \left( X_n^j(t) \right) + \frac{S_n(t)}{T_n(t)+1} - X_n^j(t)\left( \frac{S_n(t)}{T_n(t)+1} \right)^2 \right] \dd t + \dd B_j(t), \quad (j=1,\dots,n)
\end{equation}
where
\begin{itemize}
	\item
	for every $t \geq 0$, 
	\[
	S_n(t) := X_n^1(t)+\dots+X_n^n(t) \quad \mbox{ and } \quad T_n(t)=\left(X_n^1(t)\right)^2+\dots+\left(X_n^n(t)\right)^2;
	\]
	\item the process $((B_1(t),\dots,B_n(t)), t \geq 0)$ is a standard $n$-dimensional Brownian motion.
\end{itemize}
The solution $\left((X^{1}_{n}(t),\dots,X^{n}_{n}(t)),\,t\geq 0\right)$ of \eqref{eqn:Langevin_dynamics} is a Markov diffusion process on $\mathbb{R}^n$. For any $f\in C^2(\R^n)$ and $z \in \R^n$, it evolves with infinitesimal generator
\begin{equation}\label{eqn:configuration_inf_gen}
L_nf(z)=\frac{1}{2}\sum_{j=1}^n\frac{\partial^2 f(z)}{\partial z_j^2}+ \frac{1}{2} \sum_{j=1}^n\left[ 2\varphi'(z_j) + \frac{S_n[z]}{T_n[z]+1}-z_j\left(\frac{S_n[z]}{T_n[z]+1}\right)^2\right]\frac{\partial f(z)}{\partial z_j},
\end{equation}
with $S_n[z] := \sum_{i=1}^n z_i$ and $T_n[z] := \sum_{i=1}^n z_i^2$. We recall once more that the measure \eqref{eqn:CW_of_SOC_distribution:modification} is the unique invariant distribution for $L_n$.\\

Our main aim is to describe the limiting behavior of moderate fluctuations for the evolution \eqref{eqn:CW_of_SOC_distribution:modification}; the technical difficulties arising have not allowed us to obtained the desired results under the present assumptions, in particular with no requirements on the function $\varphi$ (except evenness and exponential integrability). Thus we find it preferable to make the following assumption at this point:

\begin{itemize}
	\item[\textbf{(A)}] $\varphi (z) = - \dfrac{z^2}{4 \sigma^2}$, for some $\sigma>0$.
\end{itemize}

Assumption \textbf{(A)} corresponds to choosing the Gaussian probability density as reference measure $\rho$ for the spin variables. Under assumption \textbf{(A)}, the process $\left( (n^{-1} S_n(t), n^{-1}T_n(t)), t \geq 0 \right)$ is a \emph{sufficient statistics} for our model. Indeed, the dynamics \eqref{eqn:configuration_inf_gen} on the configurations induce a Markovian dynamics on $\mathbb{R}^2$ for the process $\left( (n^{-1} S_n(t), n^{-1}T_n(t)-\sigma^2), t \geq 0 \right)$  that evolves with generator 
\begin{multline}\label{eqn:statistics_inf_gen}
A_n f (x,y) = \frac{1}{2n} \frac{\partial^2 f}{\partial x^2}(x,y) +\frac{2x}{n} \frac{\partial^2 f}{\partial x \partial y}(x,y) + \frac{2(y+\sigma^2)}{n} \frac{\partial^2 f}{\partial y^2}(x,y)\\
+ \frac{1}{2} \left[ -\frac{n^2 x^3}{(ny+n\sigma^2+1)^2} + \frac{nx}{ny+n\sigma^2+1} - \frac{x}{\sigma^2} \right] \frac{\partial f}{\partial x}(x,y) \\
+ \left[ \frac{nx^2}{(ny+n\sigma^2+1)^2} - \frac{y}{\sigma^2} \right] \frac{\partial f}{\partial y}(x,y). 
\end{multline}
The derivation  of the previous formula from \eqref{eqn:CW_of_SOC_distribution:modification} is omitted, since it is tedious and rather standard. We refer to \cite[Sect.~3, Prop.~6]{Gor17} for the detailed derivation of a similar result (the main difference being the space-time scaling the process is subject to).
	
As a consequence of \eqref{eqn:statistics_inf_gen}, the task of characterizing the time-evolution of the fluctuation flow
\[
\left( \frac{1}{n} \sum_{k=1}^n \delta_{X_n^k(t)} - \dd \rho(z) \right)_{t\geq0 }
\]
turns into analyzing the path-space deviations of $\left(\left(n^{-1} S_n(t), n^{-1}T_n(t) - \sigma^2\right), t \geq 0 \right)$. From being infinite dimensional, the problem reduces to a two dimensional problem.\\

First consider a standard central limit theorem setting and therefore consider the two dimensional process classically rescaled by $\sqrt{n}$. Computing the formal limit of \eqref{eqn:statistics_inf_gen} for functions of the variable $x$ (resp. $y$) only, we find that, as $n \uparrow \infty$, the process $\left(n^{-1/2} S_n(t), t \geq 0 \right)$ converges weakly to a standard Brownian motion, whereas $\left( \sqrt{n} \left(n^{-1}T_n(t) - \sigma^2\right), t \geq 0 \right)$ to the Ornstein-Uhlenbeck process solution of
	\begin{equation} \label{eqn:OU_process}
	\dd Y(t) = - \frac{Y(t)}{\sigma^2} \dd t + 2 \sigma \, \dd B_1(t),
	\end{equation} 
	where $B_1$ is a standard Brownian motion.
%
Thus, the second component of the pair $\left(\left(n^{-1/2} S_n(t), \sqrt{n} \left(n^{-1}T_n(t) - \sigma^2 \right)\right), t \geq 0 \right)$ has a confined process as a limit, whereas the first one fluctuates homogeneously in space. Indeed, in this last case, as shown in \cite{Gor17}, a further rescaling  allows one to see that the process $\left(n^{-3/4} S_n(\sqrt{n}t), t \geq 0 \right)$ converges weakly to the solution of 
	\begin{equation*}
	\dd X(t) = - \frac{X^3(t)}{2 \sigma^4} \dd t + \dd B(t),
	\end{equation*} 
with $B(t)$ standard Brownian motion.
Under this critical space-time rescaling the process, $\left( n^{-1}T_n(t) - \sigma^2, t \geq 0 \right)$ collapses: at times of order $\sqrt{n}t$ the process $ \left(\sqrt{n} \left(n^{-1}T_n(\sqrt{n} t) - \sigma^2\right), t \geq 0 \right)$ equilibrates at a Gaussian measure and therefore, when refining the space rescaling, the process $\left( n^{1/4} \left(n^{-1}T_n(\sqrt{n} t) - \sigma^2\right) \right)$ equilibrates at $\delta_{0}$. This was proven in \cite[Lem.~9]{Gor17}.
	
We complement the analysis by considering the moderate deviations of $n^{-1} S_n(t)$ around  equilibrium, under the microscopic dynamics \eqref{eqn:statistics_inf_gen}. As in the weak convergence setting mentioned above, corresponding to the separation of time-scales for the evolutions of the two processes, we need to prove a projected path-space large deviation principle, in other words for the component $n^{-1}S_n$ only. More precisely, we get the following statement.  

\begin{theorem}\label{thm:path_space_MDP}
	Let $(b_n)_{n \in \mathbb{N}^*}$ be a sequence of positive real numbers such that $b_n \uparrow \infty$ and  \mbox{$b_n^4 n^{-1} \downarrow 0$}. Suppose that $b_n n^{-1} S_n(0)$ satisfies a large deviation principle with speed $nb_n^{-4}$ on $\mathbb{R}$ and rate function $I_0$. Then, the trajectories $\left( b_n n^{-1} S_n(b_n^2 t), t \geq 0 \right)$ satisfy the large deviation principle
	\[
	\mathbb{P} \left[ \left( b_n n^{-1} S_n(b_n^2 t), t \geq 0 \right) \approx (\gamma(t), t \geq 0) \right] \asymp e^{-nb_n^{-4} I(\gamma)}
	\]
	on $C_{\mathbb{R}}(\mathbb{R}^+)$, with good rate function
	\begin{equation}\label{eqn:rate_function}
	I(\gamma) = 
	\begin{cases}
	I_0(\gamma(0)) + \int_0^{+\infty} \cL(\gamma(s),\dot{\gamma}(s))\, \dd s & \text{ if } \gamma \in \mathcal{AC},\\
	\infty & \text{ otherwise},
	\end{cases}
	\end{equation}
	where
	\[
	\mathcal{L}(x,v) := \frac{1}{2} \left\vert v + \frac{x^3}{2\sigma^4} \right\vert^2.
	\]
\end{theorem}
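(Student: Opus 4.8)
The plan is to apply the Feng–Kurtz programme for path-space large deviations, exactly in the form recalled in the appendix, which reduces the theorem to three ingredients: exponential tightness of the trajectories $t\mapsto b_n n^{-1}S_n(b_n^2 t)$, convergence of the (rescaled, exponentiated) generators $H_n f = b_n^4 n^{-1} e^{-n b_n^{-4} f} A_n e^{n b_n^{-4} f}$ to a limiting Hamiltonian $H$ on the slow variable $x$, and well-posedness (comparison principle) of the Hamilton–Jacobi equation $f - \lambda H f = h$ associated with $H$. The core analytic difficulty is that the limit is a \emph{projected} large deviation principle: the fast variable $y = n^{-1}T_n - \sigma^2$ must be averaged out. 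Following \cite{CoKr18}, I would not try to prove convergence of $H_n$ to a single operator; instead I would identify upper and lower limiting Hamiltonians $H_\dagger$ and $H_\ddagger$ acting on functions of $x$ alone, built by perturbing a test function $f(x)$ with a small correction in $y$ (the standard two-scale / corrector trick from \cite{Kur73a,Kur73b,PaStVa77}), and then show $H_\dagger = H_\ddagger =: H$.

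\textbf{Step 1 (heuristic identification of $H$).} Plug $f_n(x,y) = f(x) + b_n^{-2}\, y\, \partial_x f(x)\cdot(\text{something}) + \dots$ — more precisely a perturbation of the form $f(x) + \tfrac{1}{b_n^2}\psi(x,y)$ — into $H_n$ and expand. Under the moderate scaling $b_n\uparrow\infty$, $b_n^4 n^{-1}\downarrow 0$, the diffusive terms $\tfrac{1}{2n}\partial_{xx}$ etc. produce the quadratic $\tfrac12 |p|^2$ contribution (after multiplication by $b_n^4 n^{-1}$ and the chain rule for the exponential change of variables), the drift term $-x/\sigma^2$ in the $x$-equation is subdominant after rescaling, and the nonlinearity $-\tfrac{n^2 x^3}{(ny+n\sigma^2+1)^2}$ contributes, when $y\to 0$, the term $-\tfrac{x^3}{2\sigma^4}$. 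The fast $y$-dynamics has drift $-y/\sigma^2$ which is strongly confining, so the corrector equation for $\psi$ forces $y$ to be slaved to $0$ at leading order; this is what produces the evaluation at $y=0$ in the cubic term. The outcome should be
\[
H f(x) = \frac{1}{2}\left(\partial_x f(x)\right)^2 - \frac{x^3}{2\sigma^4}\,\partial_x f(x),
\]
whose Legendre transform in $p$ is exactly $\cL(x,v) = \tfrac12 |v + x^3/(2\sigma^4)|^2$, matching \eqref{eqn:rate_function}.

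\textbf{Step 2 (make the two-scale argument rigorous).} Define $H_\dagger$ and $H_\ddagger$ as in \cite{FeKu06,CoKr18}: $(f,g)\in H_\dagger$ if there exist perturbations $f_n\to f$ with $\limsup_n H_n f_n \le g$ uniformly on compacts (and a symmetric definition with $\liminf$ for $H_\ddagger$), allowing \emph{unbounded} functions of $y$ in the perturbations since the spin state space is unbounded — this is the one genuinely new feature compared with \cite{CoKr17}. The containments $H_\dagger \subseteq H$ and $H \subseteq H_\ddagger$ (in the sense needed for the comparison principle) follow by choosing the correctors above and using the confinement in $y$ to control error terms; Lemma~9 of \cite{Gor17} on the collapse of $n^{-1}T_n-\sigma^2$ is the probabilistic counterpart and can be invoked to justify that the relevant $y$-averages concentrate at $y=0$. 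I expect \textbf{this step — constructing the correctors and bounding the remainder uniformly, in the unbounded-$y$ setting — to be the main obstacle}, precisely the point the introduction flags.

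\textbf{Step 3 (Hamilton–Jacobi well-posedness and exponential tightness).} Since $H f(x) = \tfrac12 (\partial_x f)^2 - \tfrac{x^3}{2\sigma^4}\partial_x f = \mathcal{H}(x,\partial_x f)$ with $\mathcal H(x,p)$ smooth, convex and coercive in $p$ and with the spatial coefficient $x^3/(2\sigma^4)$ of at most polynomial growth, the comparison principle for $f - \lambda H f = h$ on $\R$ holds by the standard viscosity-solution arguments (doubling of variables with a suitable containment/Lyapunov function $\Upsilon(x)=\log(1+x^2)$ to handle non-compactness), exactly as in the appendix and \cite{CoKr18}. Exponential tightness at speed $n b_n^{-4}$ follows from a uniform exponential estimate on $b_n n^{-1}S_n(b_n^2 t)$ obtained by applying $H_n$ to the Lyapunov function $x\mapsto \sqrt{1+x^2}$ (or using the appendix's criterion, which reduces exponential tightness to the existence of a good containment function for $H$). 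Combining the three ingredients with the abstract theorem of the appendix yields the stated large deviation principle on $C_{\R}(\R^+)$ with rate function $I$, where the initial cost $I_0(\gamma(0))$ comes from the assumed large deviation principle for $b_n n^{-1}S_n(0)$ and the dynamic cost is $\int_0^\infty \cL(\gamma(s),\dot\gamma(s))\,\dd s$ via the Legendre duality between $H$ and $\cL$.
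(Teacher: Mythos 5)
Your framework matches the paper's (Feng--Kurtz with $H_\dagger, H_\ddagger$, projected-process machinery, viscosity comparison), and you obtain the correct limiting Hamiltonian and Lagrangian, but the technical implementation has concrete gaps that would prevent the proof from closing. First, the corrector ansatz $f(x)+b_n^{-2}\psi(x,y)$ is at the wrong order: after Taylor-expanding $h_n(y)$, the diverging term multiplying $f'(x)$ is $-\tfrac{b_n xy}{2\sigma^4}f'(x)$, of order $b_n$, while the fast drift $-\tfrac{b_n^2 y}{\sigma^2}\partial_y$ applied to $b_n^{-2}\psi$ gives only an order-$1$ contribution and cannot kill it; the paper uses a \emph{two-level} corrector $F_{n,f}=f+b_n^{-1}\Gamma_f+b_n^{-2}\Lambda_f$ with $\Gamma_f=-\tfrac{xy}{2\sigma^2}f'$ and $\Lambda_f=\tfrac{xy^2}{8\sigma^4}(3f'+xf'')$. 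Your mechanism for the cubic term is also off: it is not produced by the corrector ``slaving $y$ to $0$'' but is simply the $y$-independent part of $-\tfrac12\tfrac{x^3}{\sigma^4}h_n^2(y)$, and the drift $-x/\sigma^2$ is not subdominant --- it nearly cancels $\tfrac{nx}{ny+n\sigma^2+1}$, and the residual \emph{is} precisely the divergent $O(b_n)$ cross-term the corrector must absorb.

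Second, invoking Lemma~9 of \cite{Gor17} (a weak-convergence collapse statement) cannot supply the deterministic, uniform-in-$(x,y)$ bounds required to verify $H_\dagger\subseteq ex-\subLIM_n H_n$ and $H_\ddagger\subseteq ex-\superLIM_n H_n$; the paper instead installs a smooth deterministic cut-off $\chi_n$ that freezes $F_{n,f}\pm\varepsilon(y^2+F_{n,g})$ outside $\{y^2+\log(1+x^2)\lesssim\log b_n\}$, and proves a Lagrange-form bound on the Taylor remainder $\varepsilon_n(y)$ for $|y|\lesssim\log^{1/2}b_n^{1/2}$. Third, the Lyapunov function $x\mapsto\sqrt{1+x^2}$ for exponential compact containment would fail: the remainder estimates require that $xg'(x)$ and $x(3g'(x)+xg''(x))$ be bounded, which holds for $g=\log(1+x^2)$ but not for $\sqrt{1+x^2}$ (where $xg'(x)$ grows linearly), and the containment function must also confine the fast variable, hence the paper's choice $\tfrac12\big(y^2+F_{n,g}(x,y)\big)$ with the cut-off $\chi_n$. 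Without these corrections you cannot identify $H$ from $H_n$, and both the remainder control and the tightness argument break down.
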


By choosing the sequence $b_n = n^{\alpha}$, with $\alpha > 0$, we can rephrase Theorem~\ref{thm:path_space_MDP} in terms of more familiar moderate scalings involving powers of the system-size. We therefore get estimates for the probability of a typical trajectory on a scale that is between a law of large numbers and a central limit theorem.  
This result extends our understanding of the path-space fluctuations for the Curie-Weiss model of self-organized criticality, in the case of Gaussian spins. We have stated this result, in combination with the non-standard central limit theorem in \cite[Thm.~1]{Gor17} in Table~\ref{tab:SOC_CW_deviations}.
The displayed conclusions are drawn under the assumption that in each case the initial condition satisfies a large deviation principle at the correct speed. Observe that self-organized criticality is reflected by moderate deviations, since the rate function retains the features of the ``critical'' evolution \eqref{eqn:critical_SDE}.  
To conclude, it is worth to mention that the methods of the papers \cite{CoKr17,CoKr18} are not sufficient to obtain a path-space large deviation principle for the process $((n^{-1}S_n(t),n^{-1}T_n(t), t \geq 0)$ by the Feng-Kurtz approach. Indeed, the Hamiltonian is not of the standard type dealt with in \cite{CoKr17} and it is not  immediately clear how the comparison principle can be treated.

\begin{table}[h!]
	\caption{Path-space fluctuations for the magnetization of the Curie-Weiss model of self-organized criticality in the case of Gaussian spins}
	\begin{center}
		\begin{tabular}{|c|c|c|}
			\hline
			\rowcolor{LightGray}
			\parbox[c][1cm][c]{1.5cm}{\scshape \footnotesize \centering Scaling Exponent} & \parbox[c][1cm][c]{4cm}{\scshape \footnotesize \centering Rescaled Process} & \parbox[c][1cm][c]{4cm}{\scshape \footnotesize \centering Limiting Theorem}\\
			\hline \hline
			$\alpha \in \left( 0, \frac{1}{4} \right)$ & {\footnotesize $\left( n^{\alpha-1} \, S_n \left( n^{2\alpha}t \right), t \geq 0 \right)$} & \parbox[c][1.5cm][c]{3cm}{\scriptsize \centering LDP at speed $n^{1-4\alpha}$ with rate function \eqref{eqn:rate_function}}\\
			\hline
			$\alpha = \frac{1}{4}$ & {\footnotesize $\left( n^{-3/4} \, S_n \left( n^{1/2}t \right), t \geq 0 \right)$} & \parbox[c][3cm][c]{5.8cm}{\scriptsize \centering  weak convergence to the unique solution of 
				\begin{equation}\label{eqn:critical_SDE}\tag{$\star$}
				\dd X(t) = - \dfrac{X^3(t)}{2\sigma^4} \, \dd t + \dd B(t) 
				\end{equation} 
				with initial condition \\[0.1cm] $X(0) = 0$ \\[0.1cm] (see \cite[Thm.~1]{Gor17})} \\
			\hline 
		\end{tabular}
	\end{center}
	\label{tab:SOC_CW_deviations}
\end{table}

\section{Proof}
\label{sect:proof}

We aim at studying moderate deviations by following the methods in \cite{FeKu06}. The techniques are based on the convergence of Hamiltonians and well-posedness of a class of Hamilton-Jacobi equations corresponding to a limiting Hamiltonian. These techniques have been applied also in \cite{DFL11,FeFoKu12,Kr16b,CoKr17,CoKr18}.  
In particular, in \cite{CoKr18} moderate deviation principles for projected processes are proved by combining the perturbation theory for Markov processes with a sophisticated notion of convergence of Hamiltonians, based on limiting upper and lower bounds.  Here we apply those same techniques, as they allow to take care of {\em unbounded spin state space.} We summarize the notions needed for our result and the abstract machinery used for the proof of a large deviation principle via well-posedness of Hamilton-Jacobi equations in Appendix \ref{appendix:large_deviations_for_projected_processes}. We rely on Theorem \ref{theorem:Abstract_LDP} for which we must check the following conditions:

\begin{itemize}
\item 
The processes $\left( \left(b_n n^{-1} S_n(b_n^2t), b_n \left( n^{-1} T_n(b_n^2 t) - \sigma^2 \right) \right), t \geq 0 \right)$  satisfy an appropriate exponential compact containment condition. See Section~\ref{Subsct:exponential_compact_containment}.
\item
There exist two Hamiltonians $H_\dagger \subseteq C_l(\mathbb{R}^2) \times C_b(\mathbb{R}^2)$ and $H_\ddagger \subseteq C_u(\mathbb{R}^2) \times C_b(\mathbb{R}^2)$ such that $H_\dagger \subseteq ex-\subLIM_n H_n$ and $H_\ddagger \subseteq ex-\superLIM_n H_n$. This extension allows for unbounded functions in the domain. See Section~\ref{subsct:perturbation_and_approximation}. Moreover, we refer to Definition~\ref{definition:subLIM_superLIM} for the notions of $\subLIM$ and $\superLIM$.
\item 
There is an operator $H \subseteq C_b(\bR) \times C_b(\bR)$ such that, for all $\lambda > 0$ and $h \in C_b(\mathbb{R})$, every viscosity subsolution to $f- \lambda H_\dagger f = h$ is a viscosity subsolution to $f - \lambda Hf = h$ and  every viscosity supersolution to $f - \lambda H_\ddagger f = h$ is a viscosity supersolution to $f - \lambda H f = h$. The operators $H_\dagger$ and $H_\ddagger$ should be thought of as upper and lower bounds for the ``true'' limiting $H$ of the sequence $H_n$. See Section~\ref{subsct:perturbation_and_approximation}. 
\item 
The comparison principle holds for the Hamilton-Jacobi equation $f - \lambda H f = h$ for all $h \in C_b(\mathbb{R})$ and all $\lambda>0$. The proof of this statement is immediate, since the operator $H$ we will be dealing with is of the type considered in \cite{CoKr17}.
\end{itemize}

For the verification of all the open conditions we use the limiting behaviour of the sequence of Hamiltonians $H_n$. We then start by deriving an expansion for the Hamiltonians associated to the re-scaled fluctuation process.

\subsection{Expansion of the Hamiltonian}

Let $(b_n)_{n \in \mathbb{N}^*}$ be a sequence of positive real numbers such that $b_n \uparrow \infty$ and $b_n^4 n^{-1} \downarrow 0$. The fluctuation process $\left( \left(b_n n^{-1} S_n(b_n^2t), b_n \left( n^{-1} T_n(b_n^2 t) - \sigma^2 \right) \right), t \geq 0 \right)$ has Markovian  evolution on state space \mbox{$E_n := \mathbb{R} \times  (-\sigma^2 b_n,+\infty)$} and its generator $G_n$ can be deduced from \eqref{eqn:statistics_inf_gen}. 

\begin{lemma}\label{lemma:space-time_rescaled_process_generator}
Let $n\in \N^*$. The Markov process $\left( \left(b_n n^{-1} S_n(b_n^2t), b_n \left( n^{-1} T_n(b_n^2 t) - \sigma^2 \right) \right), t \geq 0 \right)$  has infinitesimal generator $\Gs_n$ that, for any $f \in C^2_c(E_n)$, satisfies 
\begin{multline}\label{eqn:space-time_rescaled_process_generator}
\Gs_nf(x,y)= \frac{1}{2} \left(\dfrac{xb_n^2}{\sigma^2}(h_n(y)-1)-\dfrac{x^3}{\sigma^4}h_n^2(y)\right)\frac{\partial f}{\partial x}(x,y)+\left(\dfrac{b_nx^2}{n\sigma^4}h_n^2(y)-\dfrac{b_n^2y}{\sigma^2}\right)\frac{\partial f}{\partial y}(x,y)\\
+\dfrac{b_n^4}{2n}\frac{\partial^2 f}{\partial x^2}(x,y) +\dfrac{2 b_n^3x}{n}\frac{\partial^2 f}{\partial x\partial y}(x,y)+\dfrac{2b_n^4}{n}\left(\dfrac{y}{b_n}+\sigma^2\right)
\frac{\partial^2 f}{\partial y^2}(x,y),
\end{multline}
where the function $h_n: \, (-\sigma^2 b_n,+\infty) \, \to \mathbb{R}$ is defined by $h_n(y) = \left( 1+ \frac{y}{b_n\sigma^2}+\frac{1}{n\sigma^2}\right)^{-1}$.
\end{lemma}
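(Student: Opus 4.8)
The plan is to read $\Gs_n$ off directly from the generator $A_n$ of the statistics process $\left((n^{-1}S_n(t), n^{-1}T_n(t) - \sigma^2), t \geq 0\right)$, displayed in \eqref{eqn:statistics_inf_gen}, using the elementary rule for how a Markov generator transforms under a deterministic space--time rescaling. Writing $\phi_n(u,v) := (b_n u, b_n v)$, the process in the statement is $\left(\phi_n\!\left(n^{-1}S_n(b_n^2 t),\, n^{-1}T_n(b_n^2 t) - \sigma^2\right), t \geq 0\right)$, so for $f \in C^2_c(E_n)$ one has
\[
\Gs_n f(x,y) \;=\; b_n^2 \, \big(A_n (f \circ \phi_n)\big)\!\left(\tfrac{x}{b_n},\, \tfrac{y}{b_n}\right),
\]
where the prefactor $b_n^2$ is produced by the time change $t \mapsto b_n^2 t$ and the composition with $\phi_n$ encodes the spatial dilation by $b_n$. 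With $g := f \circ \phi_n$ one has $\partial_u g = b_n (\partial_x f)\circ\phi_n$, $\partial_v g = b_n (\partial_y f)\circ\phi_n$, $\partial_u^2 g = b_n^2 (\partial_x^2 f)\circ\phi_n$, $\partial_{u}\partial_v g = b_n^2 (\partial_x\partial_y f)\circ\phi_n$, $\partial_v^2 g = b_n^2 (\partial_y^2 f)\circ\phi_n$; moreover $g \in C^2_c(\mathbb{R}\times(-\sigma^2,+\infty))$, which is the natural domain of $A_n$, so the right-hand side is well defined. Substituting these expressions into \eqref{eqn:statistics_inf_gen} evaluated at $(u,v) = (x/b_n, y/b_n)$ and bookkeeping the resulting powers of $b_n$ is the entire computation.

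The only step requiring attention — and the source of the function $h_n$ — is the factor $n v + n\sigma^2 + 1$ in the drift of $A_n$, evaluated at $v = y/b_n$. I would rewrite
\[
n\,\tfrac{y}{b_n} + n\sigma^2 + 1 \;=\; n\sigma^2\!\left(1 + \tfrac{y}{b_n\sigma^2} + \tfrac{1}{n\sigma^2}\right) \;=\; \frac{n\sigma^2}{h_n(y)},
\]
which is exactly the definition of $h_n$, so that every occurrence of that fraction collapses to a power of $h_n(y)$ over a power of $n\sigma^2$. Carrying this through: the $x$-drift of $A_n$, multiplied by the overall factor $\tfrac12 b_n^3$ (namely $b_n^2$ from the time change, the $\tfrac12$, and the chain-rule $b_n$ from $\partial_u g$), yields $\tfrac12\big(\tfrac{xb_n^2}{\sigma^2}(h_n(y)-1) - \tfrac{x^3}{\sigma^4}h_n^2(y)\big)$ once the two non-cubic terms are grouped; the $y$-drift, multiplied by $b_n^3$, yields $\tfrac{b_nx^2}{n\sigma^4}h_n^2(y) - \tfrac{b_n^2y}{\sigma^2}$; and the three second-order terms of $A_n$, after the substitution and the $b_n^2$ time factor, reproduce $\tfrac{b_n^4}{2n}\partial_x^2 f$, $\tfrac{2b_n^3x}{n}\partial_x\partial_y f$ and $\tfrac{2b_n^4}{n}\big(\tfrac{y}{b_n}+\sigma^2\big)\partial_y^2 f$ (the mixed term carrying $b_n^3$ rather than $b_n^4$ because of the substitution $u=x/b_n$). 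This is precisely \eqref{eqn:space-time_rescaled_process_generator}.

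It remains only to record the state space: since $T_n(t) \geq 0$ almost surely, the second coordinate $b_n(n^{-1}T_n(t)-\sigma^2)$ takes values in $(-\sigma^2 b_n, +\infty)$, so $E_n = \mathbb{R}\times(-\sigma^2 b_n, +\infty)$; on this set $h_n$ is smooth and strictly positive, hence the displayed expression makes sense for every $f \in C^2_c(E_n)$. I do not expect a genuine obstacle here: the argument is purely a change of variables, and the only place one can go astray is in tracking the powers of $b_n$ contributed by the spatial dilation against the single $b_n^2$ from the time change, together with the rewriting of the denominator through $h_n$. As a consistency check, the same identity follows by applying Itô's formula to $f\big(b_n n^{-1}S_n(b_n^2 t),\, b_n(n^{-1}T_n(b_n^2 t)-\sigma^2)\big)$ directly from the Langevin system \eqref{eqn:Langevin_dynamics}, exactly as in \cite[Sect.~3, Prop.~6]{Gor17}.
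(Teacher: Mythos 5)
Your proposal is correct and is exactly the change-of-variables computation the paper alludes to when it says $G_n$ ``can be deduced from \eqref{eqn:statistics_inf_gen}''; the key identity $n\frac{y}{b_n}+n\sigma^2+1=\frac{n\sigma^2}{h_n(y)}$, the power-of-$b_n$ bookkeeping, and the identification of the state space $E_n$ all match. No gap.
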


By applying the chain rule to the function $\exp \{ nb_n^{-4}f(x,y) \}$, for $f \in C^2_c(E_n)$, it is easy to see that, at speed $nb_n^{-4}$, the Hamiltonian
\[
H_n f (x,y) = b_n^4 n^{-1} e^{-nb_n^{-4} f(x,y)} G_n \left( e^{nb_n^{-4} f} \right) (x,y)
\]
results in
\begin{multline}\label{eqn:Hamiltonian}
\Hs_nf(x,y)=\Gs_nf(x,y)+\dfrac{1}{2}\left(\dfrac{\partial f}{\partial x}(x,y)\right)^2+2\sigma^2\left(\dfrac{\partial f}{\partial y}(x,y)\right)^2
\\
+\dfrac{2 x }{b_n}\dfrac{\partial f}{\partial x}(x,y)\dfrac{\partial f}{\partial y}(x,y)+\dfrac{2y }{b_n}
\left(\frac{\partial f}{\partial y}(x,y)\right)^2,
\end{multline}
with $G_n$ given by \eqref{eqn:space-time_rescaled_process_generator}. We Taylor expand the function $h_n(y)$ appearing in the definition of $G_n$ up to second order:
\begin{equation}\label{eqn:Taylor_expansion_h}
h_n(y)=1-\frac{y}{b_n\sigma^2}+\frac{y^2}{b_n^2\sigma^4}+\frac{1}{b_n^2}\varepsilon_n(y),
\end{equation}
where the sequence of functions $(\varepsilon_n)_{n\in \N^*}$ converges to zero, uniformly in $n$, on compact sets of~$\R$. 

In what follows we will require a more accurate control on the reminder $\varepsilon_n(y)$. For this reason we give here the following lemma.

\begin{lemma} \label{lemma:control_epsilon} 
Set $K_n = [-\sigma^2 \log^{1/2} b_n^{1/2},\sigma^2 \log^{1/2} b_n^{1/2}]$. There exists a positive constant $c$, independent of $n$, such that we have
\begin{equation}\label{eqn:remainder_Taylor_expansion_h}
\sup_{y \in K_n} |\epsilon_n(y)| \leq c \, b_n^{-1} \log^{3/2} b_n^{1/2}.
\end{equation}
\end{lemma}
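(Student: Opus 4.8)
The plan is to make the Taylor remainder explicit. Starting from
\[
h_n(y)=\left(1+\frac{y}{b_n\sigma^2}+\frac{1}{n\sigma^2}\right)^{-1},
\]
write $u = u_n(y) := \frac{y}{b_n\sigma^2}+\frac{1}{n\sigma^2}$, so that $h_n(y)=(1+u)^{-1}$ and, by \eqref{eqn:Taylor_expansion_h},
\[
\varepsilon_n(y) = b_n^2\Bigl[(1+u)^{-1} - 1 + \frac{y}{b_n\sigma^2} - \frac{y^2}{b_n^2\sigma^4}\Bigr].
\]
First I would use the exact expansion $(1+u)^{-1} = 1 - u + u^2 - \frac{u^3}{(1+\xi)^4}\cdot(\text{something})$; more precisely, by the integral/Lagrange form of the remainder, $(1+u)^{-1} = 1-u+u^2 - \frac{u^3}{(1+\theta u)^4}$ for some $\theta\in(0,1)$, valid whenever $1+\theta u>0$. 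Substituting $u = \frac{y}{b_n\sigma^2}+\frac{1}{n\sigma^2}$ and comparing with the claimed expansion $1 - \frac{y}{b_n\sigma^2}+\frac{y^2}{b_n^2\sigma^4}$, the difference collects into three groups: (i) the cross terms in $u^2$ involving $\frac{1}{n\sigma^2}$, namely $\frac{2y}{n\sigma^2 b_n\sigma^2}+\frac{1}{n^2\sigma^4}$; (ii) the linear term $-\frac{1}{n\sigma^2}$; and (iii) the cubic remainder $-\frac{u^3}{(1+\theta u)^4}$. Multiplying by $b_n^2$ gives $\varepsilon_n(y)$ as a sum of these three contributions scaled by $b_n^2$.

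Next I would bound each contribution uniformly over $y\in K_n = [-\sigma^2\log^{1/2}b_n^{1/2},\,\sigma^2\log^{1/2}b_n^{1/2}]$. On $K_n$ we have $|y|\le \sigma^2\log^{1/2}b_n^{1/2}$, hence $|u_n(y)| \le \frac{\log^{1/2}b_n^{1/2}}{b_n} + \frac{1}{n\sigma^2}$, which tends to $0$; in particular, for $n$ large, $|u|\le \tfrac12$ say, so $1+\theta u \ge \tfrac12$ and $(1+\theta u)^{-4}\le 16$. Then: contribution (ii) contributes $b_n^2\cdot\frac{1}{n\sigma^2} = \frac{b_n^2}{n\sigma^2}$, and since $b_n^4 n^{-1}\downarrow 0$ we have $b_n^2 n^{-1}\le b_n^{-2}(b_n^4 n^{-1})\to 0$, so this is $o(b_n^{-1})$ — in fact far smaller than the claimed $b_n^{-1}\log^{3/2}b_n^{1/2}$. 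Contribution (i), after multiplication by $b_n^2$, is $O\bigl(\frac{b_n|y|}{n}+\frac{b_n^2}{n^2}\bigr) = O\bigl(\frac{b_n\log^{1/2}b_n^{1/2}}{n}\bigr)$, again negligible against $b_n^{-1}\log^{3/2}b_n^{1/2}$ using $b_n^4 n^{-1}\to 0$. The dominant term is contribution (iii): $|{-b_n^2 u^3/(1+\theta u)^4}| \le 16\, b_n^2 |u|^3 \le 16\, b_n^2\bigl(\frac{\log^{1/2}b_n^{1/2}}{b_n}+\frac{1}{n\sigma^2}\bigr)^3$. Expanding the cube, the leading term is $16\,b_n^2\cdot\frac{\log^{3/2}b_n^{1/2}}{b_n^3} = 16\,b_n^{-1}\log^{3/2}b_n^{1/2}$, and all the other terms (involving the $\frac{1}{n\sigma^2}$ summand) are smaller by the same reasoning as above. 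Collecting all three bounds and taking $c$ to absorb the constant $16$ plus the lower-order pieces yields \eqref{eqn:remainder_Taylor_expansion_h} for all $n$ large; adjusting $c$ upward handles the finitely many remaining small $n$ on the (then compact, bounded) sets $K_n$.

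**The main obstacle** is essentially bookkeeping: one must be careful that the window $K_n$ grows (so $|y|$ is not bounded, only $o(b_n)$), and keep track of which terms the $\frac{1}{n\sigma^2}$-shift produces after multiplication by $b_n^2$ — it is tempting to drop it, but it must be checked that every term it generates is dominated by $b_n^{-1}\log^{3/2}b_n^{1/2}$, which is exactly where the hypothesis $b_n^4 n^{-1}\downarrow 0$ is used. There is no conceptual difficulty: the whole proof is a one-term Taylor expansion of $(1+u)^{-1}$ with Lagrange remainder, followed by crude size estimates on $K_n$. I would present it as: (1) exact remainder formula; (2) the size bound $|u_n(y)|\le \log^{1/2}b_n^{1/2}/b_n + 1/(n\sigma^2)$ on $K_n$; (3) bound the denominator below; (4) cube and collect, invoking $b_n^4 n^{-1}\to 0$ to discard subleading terms.
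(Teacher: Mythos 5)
Your proof is correct and follows essentially the same strategy as the paper: Taylor expansion with Lagrange remainder, the cubic remainder identified as the dominant contribution, and the hypothesis $b_n^4 n^{-1}\downarrow 0$ invoked to discard the $n$-shift terms. The only cosmetic difference is that the paper expands $h_n$ in powers of $y$ (keeping $1/(n\sigma^2)$ inside the coefficients), whereas you expand $(1+u)^{-1}$ in powers of the combined variable $u = y/(b_n\sigma^2)+1/(n\sigma^2)$ and then separate the pieces afterwards; both lead to the same estimates.
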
 

\begin{proof}
We Taylor expand the function $h_n(y)$ up to second order and we express the reminder in Lagrange's form. Taking out the highest order terms to obtain \eqref{eqn:Taylor_expansion_h}, we find
\begin{multline*}
\epsilon_n(y) = - \frac{b_n^2}{1 + n\sigma^2} - \frac{b_n y}{\sigma^2} \left[\left( 1 - \frac{1}{1 + n\sigma^2} \right)^2 - 1 \right] +  \frac{y^2}{\sigma^4} \left[\left( 1 - \frac{1}{1 + n\sigma^2} \right)^3 - 1 \right] \\
- \frac{y^3}{b_n \sigma^6} \left(1 + \frac{\zeta}{b_n \sigma^2} + \frac{1}{n\sigma^2}\right)^{-4},
\end{multline*}
with $|\zeta| < |y|$. Note that the first three terms on the right-hand side are at most of order $b_n^{-2}$. The final term is of order $b_n^{-1} \log^{3/2} b_n^{1/2}$, as the fraction that is taken to the fourth power is asymptotically converging to $1$. 	
\end{proof}

Turning back to the expansion of $G_n$ in \eqref{eqn:Hamiltonian}, by \eqref{eqn:Taylor_expansion_h} we get
\begin{multline}\label{eqn:Hamiltonian_expansion}
\Hs_nf(x,y) = \frac{1}{2} \left(-\frac{b_n xy}{\sigma^4} + \frac{x y^2}{\sigma^6} - \dfrac{x^3}{\sigma^4}\right)\frac{\partial f}{\partial x}(x,y) -\dfrac{b_n^2y}{\sigma^2}\frac{\partial f}{\partial y}(x,y) \\
+\dfrac{1}{2}\left(\dfrac{\partial f}{\partial x}(x,y)\right)^2+2\sigma^2\left(\dfrac{\partial f}{\partial y}(x,y)\right)^2 + R^f_n(x,y)
\end{multline}
and the remainder
\begin{multline}\label{eqn:Hamiltonian_expansion_remainder}
R^f_n(x,y) = \left(\dfrac{x \varepsilon_n(y)}{2\sigma^2}-\dfrac{x^3}{2\sigma^4} \left( h_n^2(y)-1 \right) \right)\frac{\partial f}{\partial x}(x,y) + \dfrac{b_n x^2 h_n^2(y)}{n\sigma^4} \, \frac{\partial f}{\partial y}(x,y) \\
+ \dfrac{b_n^4}{2n}\frac{\partial^2 f}{\partial x^2}(x,y)+ 2 \left(\frac{b_n^4 \sigma^2}{n} + \frac{b_n^3y}{n} \right) \frac{\partial^2 f}{\partial y^2}(x,y) + \frac{2b_n^3x}{n} \frac{\partial^2 f}{\partial x\partial y}(x,y)\\
+\dfrac{2 x }{b_n}\dfrac{\partial f}{\partial x}(x,y)\dfrac{\partial f}{\partial y}(x,y)+\dfrac{2y }{b_n}
\left(\frac{\partial f}{\partial y}(x,y)\right)^2
\end{multline}

converges to zero, uniformly in $n$, on compact sets of $\mathbb{R}^2$.

\subsection{Perturbative approach and approximating Hamiltonians}
\label{subsct:perturbation_and_approximation}

Observe that the expansion \eqref{eqn:Hamiltonian_expansion} is diverging and, more precisely, is diverging through terms containing the $y$ variable, thus relative to the time-evolution of the process $\left(b_n \left( n^{-1}T_n(b_n^2 t) - \sigma^2 \right), t \geq 0 \right)$. Indeed, the two components of $\left(\left(b_n n^{-1} S_n(b_n^2t), b_n \left( n^{-1}T_n(b_n^2 t) - \sigma^2 \right)\right), t \geq 0 \right)$ live on two different time-scales and the asymptotic behavior of \mbox{$(b_n n^{-1} S_n(b_n^2t), t \geq 0)$} can be determined after having averaged out the evolution of $\left(b_n \left( n^{-1}T_n(b_n^2 t) - \sigma^2 \right), t \geq 0 \right)$. The ``averaging'' is obtained through a perturbative approach leading to a projected large deviation principle. This argument takes inspiration from the perturbation theory for Markov processes applied in \cite{Kur73a,Kur73b,PaStVa77} and it was also used to study path-space moderate deviations for the Curie-Weiss model with random field in \cite{CoKr18}.

\smallskip

In the present section we will first give some heuristics about the perturbative method, since it will provide  guideline for getting the approximating Hamiltonians $H_\dagger, H_\ddagger$, and then we will make it rigorous.

\paragraph{Heuristics on perturbation.} In the expansion \eqref{eqn:Hamiltonian_expansion} the leading term is of order $b_n^2$ and thus explodes as $n \uparrow \infty$. We think of $b_n^{-1}$ as a perturbative parameter and we use a second order perturbation $F_{n,f}$ of $f$ to introduce some negligible (in the infinite volume limit) terms providing that the whole expansion does not diverge. \\
More precisely, given two arbitrary functions $\Hf_f, \Kf_f: \mathbb{R}^2 \to \mathbb{R}$, we define the perturbation of $f$ as
\begin{equation}\label{eqn:perturbation_definition}
F_{n,f}:(x,y)\longmapsto f(x)+ b_n^{-1} \, \Hf_f(x,y)+ b_n^{-2} \, \Kf_f(x,y)
\end{equation}
and then we choose $\Hf_f$ and $\Kf_f$ so that
\[
\Hs_n F_{n,f}(x,y) =  \Hs f(x) + \mbox{remainder},
\]
where $H f(x)$ is of order $1$ with respect to $b_n$ and the remainder contains smaller order terms. We assume that  $\Hf_f$ and $\Kf_f$ are at least of class $C^2$ and we compute $\Hs_n F_{n,f}$. Using \eqref{eqn:Hamiltonian_expansion} yields
\begin{multline*}
\Hs_nF_{n,f}(x,y)=-\frac{b_n xy}{2\sigma^4}f'(x)+\left(\frac{x y^2}{2\sigma^6}-\dfrac{x^3}{2\sigma^4}\right)f'(x)+\dfrac{1}{2}\left(f'(x)\right)^2\\
-\dfrac{yb_n}{\sigma^2}\frac{\partial \Hf_f}{\partial y}(x,y)-\frac{xy}{2\sigma^4}\frac{\partial \Hf_f}{\partial x}(x,y)-\dfrac{y}{\sigma^2}\frac{\partial \Kf_f}{\partial y}(x,y)+\mbox{remainder}.
\end{multline*}
To eliminate the terms of order $b_n$ and of order $1$ in the variable $y$, the functions $\Hf_f$ and $\Kf_f$ must necessarily verify
\begin{equation}\label{eqn:conditions_perturbative_functions}
\forall (x,y)\in \R^2
\qquad
\left\{
\begin{array}{l}
-\dfrac{y}{\sigma^2}\dfrac{\partial \Hf_f}{\partial y}(x,y)-\dfrac{xy}{2\sigma^4}f'(x)=0 \\[0.4cm]
-\dfrac{y}{\sigma^2}\dfrac{\partial \Kf_f}{\partial y}(x,y) -\dfrac{xy}{2\sigma^4}\dfrac{\partial \Hf_f}{\partial x}(x,y) + \dfrac{x y^2}{2\sigma^6} f'(x) = 0.
\end{array}
\right.
\end{equation}
If we take
\begin{equation}\label{eqn:perturbation_functions}
\Hf_f:(x,y)\longmapsto-\frac{xy}{2\sigma^2}f'(x)  \quad \mbox{ and } \quad \Kf_f:(x,y)\longmapsto \frac{xy^2}{8\sigma^4}(3f'(x)+xf''(x)),
\end{equation}
then the conditions \eqref{eqn:conditions_perturbative_functions} are satisfied and we obtain
\[
H_n F_{n,f} (x,y) = -\dfrac{x^3}{2\sigma^4}f'(x)+\dfrac{1}{2}\left(f'(x)\right)^2 + \mbox{remainder}.
\]
Provided we can control the remainder, for any function $f$ in a suitable regularity class, we {\em formally} get the following candidate limiting operator
\begin{equation}\label{eqn:limiting_Hamiltonian} 
\Hs f(x)=-\dfrac{x^3}{2\sigma^4}f'(x)+\dfrac{1}{2}\left(f'(x)\right)^2.
\end{equation}
To rigorously conclude that the Hamiltonian $H$ is the limit of the sequence $(H_n)_{n \in \mathbb{N}^*}$, with $H_n$ given in \eqref{eqn:Hamiltonian_expansion}, we should prove that $H \subseteq \LIM_n H_n$ (see Definition~\ref{def:extended_limit}). The proof of the latter assertion would consist in showing that, for every $f \in C_c^4(\mathbb{R})$, we have $\LIM_n F_{n,f} = f$ and $\LIM_n H_n F_{n,f} = Hf$. Recall that in our setting $(x,y) \in E_n = \mathbb{R} \times (-\sigma^2b_n,+\infty)$. Therefore, the functions $\Gamma_f$ and $\Lambda_f$ in \eqref{eqn:perturbation_functions}  are unbounded in $E_n$, implying in turn that also $F_{n,f}$ is unbounded in $E_n$. Due to this unboundedness, even if $f \in C_c^4(\mathbb{R})$, we can not guarantee $\sup_n \| F_{n,f} \| < \infty$ and thus we can not prove $\LIM_n F_{n,f} = f$. 

We apply the same techniques as in \cite{CoKr18}. To circumvent the problem and allow for unbounded functions in the domain, we relax our definition of limiting operator. In particular, we introduce two limiting Hamiltonians $H_{\dagger}$ and $H_\ddagger$, approximating $H$ from above and below respectively, and then we characterize $H$ by matching  upper and lower bound.

\paragraph{Approximating Hamiltonians and domain extensions.}

We have seen that the natural perturbations of our functions $f$ are unbounded. We repair this unboundedness by cutting off the functions. To this purpose, we introduce a collection of smooth increasing functions $\chi_{n} : \bR \rightarrow \bR$ such that
\begin{equation}\label{eqn:cut-off}
\chi_n(z) = 
\begin{cases}
 -\sigma^2 \log b_n^{1/2} + 1 & \text{ if } z \leq - \sigma^2 \log b_n^{1/2} \\[0.2cm]
z & \text{ if } -\sigma^2 \log b_n^{1/2} + 2 \leq z \leq \sigma^2 \log b_n^{1/2} - 2 \\[0.2cm]
\sigma^2 \log b_n^{1/2} - 1  & \text{ if } z \geq \sigma^2 \log b_n^{1/2}.
\end{cases}
\end{equation}
To make sure that the cut-off acts only outside a compact set, we first perturb our function $f$ by a Lyapunov function $\varepsilon (y^2 + \log(1+x^2))$. The latter function will indeed play a special role in establishing the exponential compact containment condition in Section \ref{Subsct:exponential_compact_containment} below.

\begin{lemma}\label{lmm:cutoff_functions}
Let $\varepsilon \in (0,1)$ and $f \in C_c^4(\mathbb{R})$. Consider the cut-off \eqref{eqn:cut-off} and define the functions
\[
\chi_n \left( F_{n,f}(x,y) \pm \varepsilon (y^2 + F_{n,g}(x,y))\right), 
\]
with $F_{n,\bullet}$ as in \eqref{eqn:perturbation_definition}, \eqref{eqn:perturbation_functions} and $g(x) = \log(1+x^2)$. Then, 
\begin{enumerate}[(a)]
\item \label{item_1:lmm:cutoff_functions}
For any $C > 0$ there is an $N = N(C)$ such that, for any $n \geq N$, we have
\begin{equation*}
\chi_n \left( F_{n,f}(x,y) \pm \varepsilon (y^2 + F_{n,g}(x,y)) \right) = F_{n,f}(x,y) \pm \varepsilon (y^2 + F_{n,g}(x,y))
\end{equation*}
on the set $K_1 = K_1(C) := \left\{(x,y) \in \bR^2 \, \middle| \, \varepsilon (y^2 + \log(1+x^2))\leq C \right\}$.
\item \label{item_2:lmm:cutoff_functions}
%
Let $\overline{C}$ be the positive constant defined in \eqref{eqn:c_bar} and set $N_1 := \sup\{n \in \mathbb{N} \, \vert \, \varepsilon \leq 6 \overline{C} b_n^{-2}\}$. Then, for any $n > N_1$, the function $\chi_n \big( F_{n,f}(x,y) \pm \varepsilon (y^2 + F_{n,g}(x,y))\big)$ is constant outside the compact set $K_{2,n} := \left\{(x,y) \in \mathbb{R}^2 \, \middle| \, \varepsilon (y^2 + \log (1+x^2)) \leq 2 \sigma^2 \log b_n^{1/2} + 6 \overline{C}  \right\}$.
%
\end{enumerate}
\end{lemma}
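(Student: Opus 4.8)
The plan is to reduce both statements to locating the argument of the cut-off relative to the identity region and the two constancy plateaus of $\chi_n$ in \eqref{eqn:cut-off}. Set $A_n^{\pm}(x,y) := F_{n,f}(x,y) \pm \varepsilon(y^2 + F_{n,g}(x,y))$; by \eqref{eqn:perturbation_definition}--\eqref{eqn:perturbation_functions}, with $g(x)=\log(1+x^2)$,
\[
A_n^{\pm}(x,y) = f(x) \pm \varepsilon\big(y^2 + g(x)\big) + b_n^{-1}\big(\Gamma_f \pm \varepsilon\Gamma_g\big)(x,y) + b_n^{-2}\big(\Lambda_f \pm \varepsilon\Lambda_g\big)(x,y).
\]
Both $K_1(C)$ and $K_{2,n}$ are closed sublevel sets of the nonnegative, proper map $(x,y)\mapsto \varepsilon(y^2+\log(1+x^2))$, hence compact, so it remains only to estimate $A_n^{\pm}$ on $K_1(C)$ and on the complement of $K_{2,n}$, respectively.

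For part \eqref{item_1:lmm:cutoff_functions}: on the fixed compact set $K_1(C)$ the functions $f,g$ and the perturbative corrections $\Gamma_f,\Lambda_f,\Gamma_g,\Lambda_g$ (continuous on $\bR^2$) are bounded and $\varepsilon y^2\le C$, so, since $b_n\ge b_1>0$ for all $n$, $\sup_n\sup_{K_1(C)}|A_n^{\pm}|\le M(C)$ for some finite $M(C)$ (depending also on the fixed $\varepsilon$ and $f$). Because $\sigma^2\log b_n^{1/2}-2\uparrow\infty$, choosing $N=N(C)$ with $\sigma^2\log b_N^{1/2}-2\ge M(C)$ keeps $A_n^{\pm}$ inside $[-\sigma^2\log b_n^{1/2}+2,\,\sigma^2\log b_n^{1/2}-2]$ on $K_1(C)$ for every $n\ge N$, where $\chi_n$ is the identity by \eqref{eqn:cut-off}; this is the claim.

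For part \eqref{item_2:lmm:cutoff_functions}: I would show that on the complement of $K_{2,n}$, where $\varepsilon(y^2+\log(1+x^2))>2\sigma^2\log b_n^{1/2}+6\overline{C}$, one has $A_n^{+}>\sigma^2\log b_n^{1/2}$ and $A_n^{-}<-\sigma^2\log b_n^{1/2}$. A strict pointwise inequality of this type immediately gives that $\chi_n(A_n^{\pm})$ equals the corresponding constant value of $\chi_n$ (the top one for $A_n^+$, the bottom one for $A_n^-$) at every point outside $K_{2,n}$. To bound $A_n^{+}$ below I would use that $g\ge0$ and that a direct computation gives $\Lambda_g\ge0$ and $|\Gamma_g(x,y)|\le\sigma^{-2}|y|$, so that $\varepsilon F_{n,g}(x,y)\ge\varepsilon g(x)-\varepsilon b_n^{-1}\sigma^{-2}|y|$; and that $f',f''$ are compactly supported, so $x\mapsto xf'(x)$ and $x\mapsto x(3f'(x)+xf''(x))$ are bounded, giving $|\Gamma_f(x,y)|\le c|y|$, $|\Lambda_f(x,y)|\le cy^2$ for a constant $c=c(f,\sigma)$. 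Calibrating $\overline{C}$ in \eqref{eqn:c_bar} to dominate $\|f\|_\infty$, $c$, $c^2$, $\sigma^{-4}$ (and similar $O(1)$ quantities), the hypothesis $n>N_1$, i.e.\ $\varepsilon b_n^{2}>6\overline{C}$, turns the $b_n^{-2}y^2$ term and, after a weighted Young inequality, the $b_n^{-1}|y|$ term into a small fraction of $\varepsilon y^2$ plus an $O(1)$ error absorbed into $\overline{C}$; collecting terms yields $A_n^{+}\ge\tfrac12\varepsilon(y^2+g(x))-3\overline{C}>\sigma^2\log b_n^{1/2}$ off $K_{2,n}$, while $A_n^{-}<-\sigma^2\log b_n^{1/2}$ follows symmetrically, now using $-\varepsilon b_n^{-2}\Lambda_g\le0$ and $-\varepsilon g\le0$. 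It is convenient to carry out this estimate separately on $\{|x|\ge R\}$, where $R$ is a radius past which $f',f''$ vanish so that $F_{n,f}=f$, and on the strip $\{|x|<R\}$, on which $\log(1+x^2)$ and the $y$-coefficients of $\Gamma_f,\Lambda_f$ are uniformly bounded.

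The only genuinely delicate point---the main obstacle---is this constant-chasing in part \eqref{item_2:lmm:cutoff_functions}: one must fix $\overline{C}$ in \eqref{eqn:c_bar} large enough that, once $\varepsilon b_n^2>6\overline{C}$, every Young splitting of the $b_n^{-1}|y|$ and $b_n^{-2}y^2$ corrections against $\varepsilon y^2$ closes with precisely the constants $2\sigma^2\log b_n^{1/2}$ and $6\overline{C}$ entering the definition of $K_{2,n}$. Everything else---compactness of the sublevel sets, the uniform boundedness in part \eqref{item_1:lmm:cutoff_functions}, and the explicit bounds on $\Gamma_\bullet,\Lambda_\bullet$---is routine.
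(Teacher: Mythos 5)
Your proof is correct and follows essentially the same route as the paper: for part (a), bound $|A_n^{\pm}|$ uniformly in $n$ on $K_1(C)$ using that $f',f''$ vanish outside a compact set and that $xg'(x)$, $x(3g'(x)+xg''(x))$ are bounded, then let the cut-off move past this bound; for part (b), use $n>N_1$ to absorb the $b_n^{-2}y^2$ correction into a fraction of $\varepsilon y^2$ and conclude $A_n^{+}>\sigma^2\log b_n^{1/2}$ (resp.\ $A_n^{-}<-\sigma^2\log b_n^{1/2}$) off $K_{2,n}$, so $\chi_n$ sits on the corresponding plateau. The only difference is cosmetic: the paper handles both signs at once through the single global estimate \eqref{estimate:lmm:cutoff_functions} rather than your case split on $|x|\gtrless R$ and the sign of $\Lambda_g$.
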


\begin{proof}
We start by proving \eqref{item_1:lmm:cutoff_functions}. Recall that $f \in C_c^4(\mathbb{R})$, so there exists a positive constant $M$ such that the derivatives of $f$ (and, as a consequence, $\Gamma_f$ and $\Lambda_f$) vanish at $x \notin [-M,M]$. Therefore, it yields 
%
\[
\vert f(x) \vert + b_n^{-1} \left\vert \Gamma_{f} (x, y) \right\vert + b_n^{-2} \left\vert \Lambda_{f} (x, y) \right\vert \leq \|f\| + \frac{M}{2\sigma^2} \|f'\| |y| + \frac{M}{8\sigma^4}(3\|f'\| + M\|f''\|) y^2,
\] 
where $\| \cdot \|$ denotes the $L^\infty$-norm. Moreover, since $|xg'(x)| \leq 1$ and $|x(3g'(x) + xg''(x))| \leq 8$  we also get the bound
%
%
%
\[
\varepsilon \left( b_n^{-1} \left\vert \Gamma_{g} (x, y) \right\vert + b_n^{-2} \left\vert \Lambda_{g} (x, y) \right\vert \right) \leq \frac{\vert y \vert}{\sigma^2} + \frac{y^2}{\sigma^4}.
\]
Setting 
\begin{equation}\label{eqn:c_bar}
\overline{C} :=  \max \left\{ \|f\|, \, \frac{1}{2\sigma^2}(M \|f'\|+2), \, \frac{1}{8\sigma^4}(3 M \|f'\| + M^2 \|f''\| + 8) \right\}
\end{equation}
and putting the two previous estimates together we obtain
\begin{equation}\label{estimate:lmm:cutoff_functions}
\left\vert F_{n,f}(x,y) \pm \varepsilon \left( b_n^{-1} \Gamma_{g} (x, y) + b_n^{-2}\Lambda_{g} (x, y) \right) \right\vert \leq \overline{C} \left( 1+ b_n^{-1} \vert y \vert + b_n^{-2} y^2 \right) \leq 3 \overline{C} \left( 1 +  b_n^{-2} y^2 \right),
\end{equation}
%
where the last inequality follows from $|ab| \leq 2(a^2+b^2)$, with $a,b \in\mathbb{R}$.
%
Consider an arbitrary $C > 0$. By \eqref{estimate:lmm:cutoff_functions}, we find that $(x,y) \mapsto F_{n,f}(x,y) \pm \varepsilon (y^2 + F_{n,g}(x,y))$ is bounded uniformly in $n$ on the set $K_1$. To conclude, simply observe that, since the cut-off is moving to infinity, for sufficiently large $n$, we obtain $\chi_n \equiv \mathrm{id}$ on $K_1$.

We proceed with the proof of (b). For any $n > N_{1}$ and  any $(x,y) \notin K_{2,n}$, we obtain 
\begin{align*}
F_{n,f}(x,y) &+ \varepsilon ( y^2 + F_{n,g}(x,y) ) \\
&  =   F_{n,f}(x,y) + \varepsilon \left( b_n^{-1} \Gamma_{g} (x, y) + b_n^{-2}\Lambda_{g} (x, y) \right) + \varepsilon \left(y^2 + \log(1+x^2)\right) \\
&  \geq - 3\overline{C}\left( 1+ b_n^{-2} y^2 \right) + \frac{\varepsilon}{2} y^2 + \frac{\varepsilon}{2} \left(y^2 + 2\log(1+x^2)\right) \\
& \geq - 3 \overline{C} +  \frac{\varepsilon}{2} (y^2+2\log(1+x^2)) \\
& > \sigma^2 \log b_n^{1/2}.
\end{align*}
The definition \eqref{eqn:cut-off} of the cut-off leads then to the conclusion. The proof for the function $F_{n,f}(x,y) - \varepsilon ( y^2+F_{n,g}(x,y))$ follows similarly.

\end{proof}

Before stating the next lemma, we want to make a remark on the notation $N_{\star}$ used therein. This index is an explicit positive integer larger than $N_1$ introduced in Lemma~\ref{lmm:cutoff_functions}\eqref{item_2:lmm:cutoff_functions} and it will be defined precisely in \eqref{def:N_star} at the end of this section.

\begin{lemma}\label{lemma:properties_fepsilon}
Let $\varepsilon \in (0,1)$ and $f \in C_c^4(\mathbb{R})$. Consider the cut-off \eqref{eqn:cut-off} and  define the functions
\[
f_n^{\varepsilon,\pm} (x,y) :=
\begin{cases}
0 & \text{ if } n \leq N_{\star} \\[0.1cm]
\chi_n \big( F_{n,f}(x,y) \pm \varepsilon (y^2 + F_{n,g}(x,y))\big) & \text{ if } n > N_{\star}
\end{cases}
\]
and
\[
f^{\varepsilon,\pm} (x,y) := f(x) \pm \varepsilon \left( y^2 + g(x) \right),
\]
with $F_{n,\bullet}$ as in \eqref{eqn:perturbation_definition}, \eqref{eqn:perturbation_functions} and $g(x) = \log(1+x^2)$. Then, for every $\varepsilon \in (0,1)$, the following properties are satisfied:
\begin{enumerate}[(a)]
\item \label{lemma:item:fepsilon_to_a}
$f_n^{\varepsilon,\pm} \in \cD(H_n)$.
\item \label{lemma:item:fepsilon_to_b}
$f^{\varepsilon,+} \in C_l(\bR^2)$ and $f^{\varepsilon,-} \in C_u(\bR^2)$.
\item \label{lemma:item:fepsilon_to_c}
We have 
\[
\inf_n  \inf_{(x,y) \in E_n} f_n^{\varepsilon,+}(x,y) > - \infty \quad \mbox{ and } \quad \sup_n \sup_{(x,y) \in E_n} f_n ^{\varepsilon,-}(x,y) < \infty.
\]
\item \label{lemma:item:fepsilon_to_d}
For every compact set $K \subseteq \bR^2$, there exists a positive integer $N = N(K)$ such that, for $n \geq N$ and $(x,y) \in K$, we have 
\[
f_n^{\varepsilon,\pm}(x,y) =  F_{n,f}(x,y) \pm \varepsilon  \left( y^2 + F_{n,g}(x,y) \right).
\]
\item \label{lemma:item:fepsilon_to_e} 
For every $c \in \bR$, we have
\[
\LIM_{n \uparrow \infty}  f_n^{\varepsilon,+} \wedge c  = f^{\varepsilon,+} \wedge c \quad \mbox{ and } \quad \LIM_{n \uparrow \infty}  f_n^{\varepsilon,-} \vee c = f^{\varepsilon,-} \vee c.
\]
\end{enumerate}
Moreover, it holds 
{\begin{enumerate}[(a)]\setcounter{enumi}{5}
\item \label{lemma:item:fepsilon_to_f} 
For every $c \in \bR$, we have 
\begin{equation*}
\lim_{\varepsilon \downarrow 0} \vn{f^{\varepsilon,+} \wedge c - f \wedge c} + \vn{f^{\varepsilon,-} \vee c - f \vee c} = 0.
\end{equation*} 
\end{enumerate}}
\end{lemma}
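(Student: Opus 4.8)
Here is how I would approach the six items of Lemma~\ref{lemma:properties_fepsilon}.

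The plan is to check the properties \eqref{lemma:item:fepsilon_to_a}--\eqref{lemma:item:fepsilon_to_f} in turn, using only Lemma~\ref{lmm:cutoff_functions}, the explicit formulas \eqref{eqn:perturbation_definition}--\eqref{eqn:perturbation_functions} for $F_{n,\bullet}$, the uniform estimate \eqref{estimate:lmm:cutoff_functions}, and the elementary fact that on any fixed compact set the corrections $b_n^{-1}\Gamma_\bullet$ and $b_n^{-2}\Lambda_\bullet$ stay bounded and tend to $0$ since $b_n\uparrow\infty$. One records first that $\cD(H_n)=C_c^2(E_n)$, because $H_n f$ in \eqref{eqn:Hamiltonian} is well defined precisely when $e^{nb_n^{-4}f}\in\cD(G_n)=C_c^2(E_n)$ (Lemma~\ref{lemma:space-time_rescaled_process_generator}). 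Property \eqref{lemma:item:fepsilon_to_a} is then immediate: for $n\le N_\star$ one has $f_n^{\varepsilon,\pm}\equiv0$, while for $n>N_\star$ (so $n>N_1$) the function $\chi_n$ is smooth and $F_{n,f},F_{n,g},y^2$ are of class $C^2$ — this is exactly where the hypothesis $f\in C_c^4(\bR)$ enters, since $\Lambda_f$ carries an $f''$ and hence needs $f\in C^4$ to stay twice differentiable — so $f_n^{\varepsilon,\pm}\in C^2$, and by Lemma~\ref{lmm:cutoff_functions}\eqref{item_2:lmm:cutoff_functions} it is constant outside a compact set. Property \eqref{lemma:item:fepsilon_to_b} is equally short: $y^2+g(x)\ge0$ gives $f^{\varepsilon,+}\ge\inf f>-\infty$ and $f^{\varepsilon,-}\le\sup f<\infty$, and both functions are continuous on $\bR^2$.

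For \eqref{lemma:item:fepsilon_to_d}, given a compact $K\subseteq\bR^2$ I would put $C_K:=\sup_{(x,y)\in K}\varepsilon(y^2+\log(1+x^2))$, so that $K\subseteq K_1(C_K)$ in the notation of Lemma~\ref{lmm:cutoff_functions}\eqref{item_1:lmm:cutoff_functions}; that lemma gives an $N(C_K)$ beyond which $\chi_n$ acts as the identity on $K_1(C_K)$, and $N(K):=\max\{N(C_K),N_\star+1\}$ does the job. Property \eqref{lemma:item:fepsilon_to_c} is the first step with real content. Writing the argument of $\chi_n$ as $\big[F_{n,f}\pm\varepsilon(b_n^{-1}\Gamma_g+b_n^{-2}\Lambda_g)\big]\pm\varepsilon(y^2+g(x))$, bounding the bracket by $3\overline{C}(1+b_n^{-2}y^2)$ via \eqref{estimate:lmm:cutoff_functions}, and using that $\varepsilon>6\overline{C}b_n^{-2}$ whenever $n>N_1$, one obtains, after discarding the nonnegative term $\varepsilon\log(1+x^2)$,
\[
F_{n,f}+\varepsilon(y^2+F_{n,g})\ \ge\ -3\overline{C}+\big(\varepsilon-3\overline{C}b_n^{-2}\big)y^2\ \ge\ -3\overline{C},
\]
and symmetrically $F_{n,f}-\varepsilon(y^2+F_{n,g})\le3\overline{C}$. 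Since for $n>N_\star$ the points $\pm3\overline{C}$ lie in the band where $\chi_n$ is the identity, monotonicity of $\chi_n$ gives $f_n^{\varepsilon,+}\ge-3\overline{C}$ and $f_n^{\varepsilon,-}\le3\overline{C}$ for $n>N_\star$, and the values $0$ for $n\le N_\star$ finish the two uniform bounds.

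Items \eqref{lemma:item:fepsilon_to_c} and \eqref{lemma:item:fepsilon_to_d} then make \eqref{lemma:item:fepsilon_to_e} routine: $f_n^{\varepsilon,+}\wedge c$ is uniformly bounded (between $\min\{0,c,-3\overline{C}\}$ and $c$) and $f^{\varepsilon,+}\wedge c\in C_b(\bR^2)$, while on a compact $K$ and for $n\ge N(K)$ property \eqref{lemma:item:fepsilon_to_d} together with $|s\wedge c-t\wedge c|\le|s-t|$ yields
\[
\sup_{(x,y)\in K}\big|f_n^{\varepsilon,+}(x,y)\wedge c-f^{\varepsilon,+}(x,y)\wedge c\big|\ \le\ \sup_{(x,y)\in K}\big|b_n^{-1}\Gamma_f+b_n^{-2}\Lambda_f+\varepsilon(b_n^{-1}\Gamma_g+b_n^{-2}\Lambda_g)\big|\ \longrightarrow\ 0,
\]
which is exactly $\LIM_n f_n^{\varepsilon,+}\wedge c=f^{\varepsilon,+}\wedge c$ (Definition~\ref{def:extended_limit}); the statement for $f_n^{\varepsilon,-}\vee c$ follows symmetrically. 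Finally, \eqref{lemma:item:fepsilon_to_f} is where the specific Lyapunov function $y^2+g(x)$ is exploited: an elementary case analysis of $\min(a+b,c)-\min(a,c)$ and of $\max(a,c)-\max(a-b,c)$, with $a=f(x)$ and $b=\varepsilon(y^2+g(x))\ge0$, shows that pointwise on $\bR^2$
\[
0\ \le\ f^{\varepsilon,+}\wedge c-f\wedge c\ \le\ \varepsilon\big(y^2+g(x)\big),\qquad 0\ \le\ f\vee c-f^{\varepsilon,-}\vee c\ \le\ \varepsilon\big(y^2+g(x)\big),
\]
so both differences are squeezed between $0$ and $\varepsilon$ times the containment function and therefore vanish as $\varepsilon\downarrow0$.

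I expect \eqref{lemma:item:fepsilon_to_c} to be the main obstacle: it is the only place where one must synchronize the rate at which the cut-off moves out ($\sigma^2\log b_n^{1/2}$), the smallness threshold $6\overline{C}b_n^{-2}$ for $\varepsilon$ encoded in $N_1$, and the quadratic growth of the Lyapunov term $\varepsilon y^2$, and it is this synchronization — rather than any single inequality — that dictates the precise design of the pre-perturbation and of $N_\star$. Item \eqref{lemma:item:fepsilon_to_f} is conceptually the heart of the lemma, in that it explains why a Lyapunov pre-perturbation rather than a bare cut-off is used, but once the pointwise squeeze is in place it costs nothing; all the remaining items are bookkeeping around Lemma~\ref{lmm:cutoff_functions} and the local-uniform vanishing of $b_n^{-1}\Gamma_\bullet,b_n^{-2}\Lambda_\bullet$.
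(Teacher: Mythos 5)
Your items~(a)--(e) reproduce the paper's argument faithfully, in some places with useful extra detail: the remark in~(a) that $\Lambda_f$ involves $f''$ and therefore $f\in C_c^4$ is needed is correct and implicit in the paper; your explanation in~(c) of why the monotone cut-off $\chi_n$ preserves the uniform lower bound $-3\overline C$ on the argument is a cleaner account than the paper's terse ``the conclusion follows''; and the Lipschitz bound $|s\wedge c-t\wedge c|\le|s-t|$ in~(e) packages the paper's sequence argument neatly. One small slip: the notion $\LIM$ is Definition~\ref{def:LIM}, not Definition~\ref{def:extended_limit} (the latter is the extended limit); you cite the wrong one.

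Your treatment of item~(f) has a genuine gap. The pointwise squeeze
\[
0\;\le\; f^{\varepsilon,+}\wedge c-f\wedge c\;\le\;\varepsilon\bigl(y^2+g(x)\bigr),
\qquad
0\;\le\; f\vee c-f^{\varepsilon,-}\vee c\;\le\;\varepsilon\bigl(y^2+g(x)\bigr)
\]
is correct, but the bounding function $\varepsilon(y^2+g(x))$ has infinite supremum on $\bR^2$ for every $\varepsilon>0$, so the squeeze does \emph{not} yield the displayed sup-norm statement $\lim_{\varepsilon\downarrow 0}\vn{f^{\varepsilon,+}\wedge c-f\wedge c}+\vn{f^{\varepsilon,-}\vee c-f\vee c}=0$. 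Indeed, writing out the minimum gives $f^{\varepsilon,+}\wedge c-f\wedge c=\min\bigl(\varepsilon(y^2+g(x)),(c-f(x))^+\bigr)$, whose supremum over $(x,y)$ equals $(c-\inf f)^+$ for every $\varepsilon>0$, so the sup-norm read literally does not decay. What your estimate does give is boundedness together with uniform convergence on compacts, i.e.\ $\LIM$-type (buc) convergence as $\varepsilon\downarrow 0$; this is exactly the type of convergence proved in~(e) and the paper's own proof of~(f) is only ``this follows similarly as in the proof of~(e)''. So the issue is as much in the stated form of~(f) as in your argument, but you should at least flag that the squeeze controls the difference only on compacts, not in sup-norm, and that this is consistent with how~(f) is subsequently used (through Lemma~\ref{lemma:extension_results}, which requires convergence after truncation and restriction to sublevel sets rather than global sup-norm convergence).
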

	
\begin{proof}
If $n < N_{\star}$ all the statements are trivial. We assume $n \geq N_{\star}$ and we prove all the properties for the `$+$' superscript case, the other being similar. 
\begin{enumerate}[(a)]
\item
It is clear from the definition \eqref{def:N_star} of $N_\star$ that $N_\star \geq N_1$. Then, as the cut-off \eqref{eqn:cut-off} is smooth, Lemma~\ref{lmm:cutoff_functions}\eqref{item_2:lmm:cutoff_functions} yields $f_n^{\varepsilon,\pm} \in C_c^\infty(\bR^2)$. In addition, the location of the cut-off and Lemma~\ref{lmm:cutoff_functions}\eqref{item_2:lmm:cutoff_functions} make sure that $f_n^{\varepsilon,\pm}$ is constant outside a compact set $K \subset E_n$, implying $f_n^{\varepsilon,\pm} \in \cD(G_n)$ and, as a consequence, $f_n^{\varepsilon,\pm} \in \cD(H_n)$. See equations \eqref{eqn:space-time_rescaled_process_generator} and \eqref{eqn:Hamiltonian} for the definitions of $G_n$ and $H_n$ respectively.
\item
This is immediate from the definitions of $f^{\varepsilon,\pm}$.
\item
From the estimate \eqref{estimate:lmm:cutoff_functions}, we deduce (keeping the same notation)
%
\[
\inf_{(x,y) \in \mathbb{R}^2} F_{n,f}(x,y) + \varepsilon \left( y^2 + F_{n,g}(x,y) \right) \geq - 3 \overline{C} (1 + b_n^{-2} y^2) + \varepsilon \left( y^2 + \log(1+x^2) \right),
\]
which is bounded from below uniformly in $n > N_1$. The conclusion follows as $N_\star \geq N_1$ (cf. equation \eqref{def:N_star}). 
%
\item
This follows immediately by Lemma~\ref{lmm:cutoff_functions}\eqref{item_1:lmm:cutoff_functions}.
\item
Fix $\varepsilon > 0$ and $c \in \bR$. By \eqref{lemma:item:fepsilon_to_c}, the sequence $\left( f_n^{\varepsilon,+} \wedge c \right)_{n \in \mathbb{N}^*}$ is uniformly bounded from below and then, we obviously get $\sup_{n \in \mathbb{N}^*} \vn{f_{n}^{\varepsilon,+} \wedge c} < \infty$. Thus, it suffices to prove uniform convergence on compact sets. Let us consider an arbitrary sequence $(x_n,y_n)$ converging to $(x,y)$ and prove $\lim_n f_n^{\varepsilon,+}(x_n,y_n) = f^{\varepsilon,+}(x,y)$. As a converging sequence is bounded, it follows from \eqref{lemma:item:fepsilon_to_d} that, for sufficiently large $n$, we have 
\begin{equation*}
f_n^{\varepsilon,+}(x_n,y_n) = F_{n,f} (x_n,y_n) + \varepsilon \left( y_n^2 + F_{n,g}(x_n,y_n) \right),
\end{equation*}
which indeed converges to $f^{\varepsilon,+}(x,y)$ as $n \uparrow \infty$.
\item
This follows similarly as in the proof of \eqref{lemma:item:fepsilon_to_e}.
\end{enumerate}		
\end{proof}

\begin{definition}\label{definition:limiting_upper_lower_hamiltonians}
Let $H \subseteq C_b(\bR) \times C_b(\bR)$, with domain $\cD(H) = C_c^\infty(\bR)$, be defined as 
\[
Hf(x) = - \frac{x^3}{2\sigma^4} f'(x) + \frac{1}{2} \left( f'(x) \right)^2.
\]
We define the approximating Hamiltonians $H_\dagger \subseteq C_l(\bR^2) \times C_b(\bR^2)$ and $H_\ddagger \subseteq C_u(\bR^2) \times C_b(\bR^2)$ as
\begin{align*}
H_\dagger & := \left\{ \Big(  f(x) + \varepsilon \left( y^2 + \log(1+x^2) \right), \, Hf(x) + \tfrac{\varepsilon }{2} \vn{f'} + \varepsilon^2 \Big) \, \middle| \, f \in C_c^\infty(\mathbb{R}), \varepsilon \in (0,1) \right\}, \\[0.1cm]
H_\ddagger & := \left\{ \Big(  f(x) - \varepsilon \left( y^2 + \log(1+x^2) \right), \, Hf(x) - \tfrac{\varepsilon}{2}  \vn{f'} - \varepsilon^2 \Big) \, \middle| \, f \in C_c^\infty(\mathbb{R}), \varepsilon \in (0,1) \right\}.
\end{align*}
\end{definition}	
	
\begin{proposition} \label{proposition:limiting_hamiltonians}
Consider notation as in Definitions~\ref{definition:limiting_upper_lower_hamiltonians} and \ref{definition:subLIM_superLIM}. We have $H_\dagger \subseteq ex-\subLIM_n  H_n$ and $H_\ddagger \subseteq ex-\superLIM_n  H_n$.
\end{proposition}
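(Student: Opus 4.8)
\emph{Proof plan.} It suffices to prove the first inclusion $H_\dagger \subseteq ex-\subLIM_n H_n$; the second one follows in the same way after reversing all inequalities and replacing $f_n^{\varepsilon,+}$ by $f_n^{\varepsilon,-}$. Fix $f \in C_c^\infty(\mathbb{R})$ and $\varepsilon \in (0,1)$ and write $g_\dagger(x,y) := Hf(x) + \tfrac{\varepsilon}{2}\vn{f'} + \varepsilon^2$ for the second component of the corresponding element of $H_\dagger$. The plan is to check that the sequence $(f_n^{\varepsilon,+})_n$ built in Lemma~\ref{lemma:properties_fepsilon} witnesses $(f^{\varepsilon,+},g_\dagger) \in ex-\subLIM_n H_n$ in the sense of Definition~\ref{definition:subLIM_superLIM}. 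Most of the required properties come straight from Lemma~\ref{lemma:properties_fepsilon}: the membership $f_n^{\varepsilon,+} \in \cD(H_n)$ is \eqref{lemma:item:fepsilon_to_a}; that $f^{\varepsilon,+} \in C_l(\mathbb{R}^2)$ and $g_\dagger \in C_b(\mathbb{R}^2)$ (the latter because $f'$, hence $Hf$, is compactly supported) is \eqref{lemma:item:fepsilon_to_b}; the uniform lower bound $\inf_n \inf_{E_n} f_n^{\varepsilon,+} > -\infty$ is \eqref{lemma:item:fepsilon_to_c}; and the truncated convergence $\LIM_n f_n^{\varepsilon,+} \wedge c = f^{\varepsilon,+}\wedge c$ for every $c \in \mathbb{R}$ is \eqref{lemma:item:fepsilon_to_e}. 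Only the Hamiltonian estimate remains: for every $c \in \mathbb{R}$,
\[
\limsup_{n \to \infty}\ \sup_{\{(x,y) \in E_n \,:\, f_n^{\varepsilon,+}(x,y) \le c\}}\big(H_n f_n^{\varepsilon,+}(x,y) - g_\dagger(x,y)\big) \le 0 .
\]

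The first move is to confine this supremum to a compact set independent of $n$. By \eqref{estimate:lmm:cutoff_functions}, the argument of the cut-off occurring in $f_n^{\varepsilon,+}$ is bounded below, uniformly in $n$, by $-3\overline{C}(1 + b_n^{-2}y^2) + \varepsilon(y^2 + \log(1+x^2))$; hence, for $n$ large, $\chi_n$ only ever clips from above — at a value tending to $+\infty$ — so any point with $f_n^{\varepsilon,+} \le c$ lies in the region where $f_n^{\varepsilon,+} = F_{n,f} + \varepsilon(y^2 + F_{n,g})$, and the same lower bound then forces $\tfrac{\varepsilon}{2}y^2 + \varepsilon\log(1+x^2) \le c + 3\overline{C}$, confining $(x,y)$ to a compact set $K_c$ not depending on $n$. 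This is exactly what the Lyapunov perturbation $\varepsilon(y^2 + \log(1+x^2))$ built into $f_n^{\varepsilon,\pm}$ is for. On $K_c$, Lemma~\ref{lemma:properties_fepsilon}\eqref{lemma:item:fepsilon_to_d} identifies $f_n^{\varepsilon,+}$ with $F_{n,f} + \varepsilon(y^2 + F_{n,g})$ for all large $n$, so the cut-off disappears and we are left with an expansion problem on $K_c$.

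The heart of the matter is the bookkeeping of $H_n[F_{n,f} + \varepsilon(y^2 + F_{n,g})]$ on $K_c$. Writing $Q$ for the quadratic gradient form occurring in \eqref{eqn:Hamiltonian} and $B$ for its polarisation, one splits
\[
H_n\big[F_{n,f} + \varepsilon(y^2 + F_{n,g})\big] = H_n F_{n,f} + \varepsilon\,G_n(y^2 + F_{n,g}) + \varepsilon^2\,Q\big(\nabla(y^2 + F_{n,g})\big) + 2\varepsilon\,B\big(\nabla F_{n,f},\nabla(y^2 + F_{n,g})\big) .
\]
The first term equals $Hf(x) + o(1)$ uniformly on $K_c$: this is precisely the cancellation of the $O(b_n)$ and $O(1)$-in-$y$ divergent contributions engineered by the perturbation functions \eqref{eqn:perturbation_functions} solving \eqref{eqn:conditions_perturbative_functions}, combined with the control of the remainder \eqref{eqn:Hamiltonian_expansion_remainder} (for which Lemma~\ref{lemma:control_epsilon} supplies the bound on $\varepsilon_n$). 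In the second term the $y^2$-part yields the strongly dissipative $-\tfrac{2\varepsilon b_n^2 y^2}{\sigma^2} + o(1)$, while $\varepsilon\,G_n F_{n,g} = -\tfrac{\varepsilon x^3}{2\sigma^4}g'(x) + o(1) = -\tfrac{\varepsilon x^4}{\sigma^4(1+x^2)} + o(1) \le o(1)$, the SOC drift being dissipative against $g(x) = \log(1+x^2)$. The term $\varepsilon^2 Q(\nabla(y^2 + F_{n,g}))$ is nonnegative and, on $K_c$, at most $\varepsilon^2\big(\tfrac12(g'(x))^2 + 8\sigma^2 y^2\big) + o(1)$ with $|g'(x)| = \tfrac{2|x|}{1+x^2} \le 1$, so its $x$-part is $\le \varepsilon^2$ and its $y^2$-part is absorbed by the $-\tfrac{2\varepsilon b_n^2 y^2}{\sigma^2}$ above once $n$ is large. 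Finally the cross term equals $\varepsilon f'(x)g'(x) + o(1)$ on $K_c$, which is controlled by the linear-in-$\varepsilon$ correction term in the definition of $H_\dagger$. Collecting these estimates gives $H_n f_n^{\varepsilon,+} \le g_\dagger + o(1)$ uniformly on $K_c$, whence the required $\limsup$ inequality. The parallel computation for $f_n^{\varepsilon,-}$, with $-\varepsilon(y^2 + F_{n,g})$ in place of $+\varepsilon(y^2 + F_{n,g})$, so that the drift terms flip to the sign needed for a lower bound, yields $H_\ddagger \subseteq ex-\superLIM_n H_n$.

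The step I expect to be the main obstacle is this final bookkeeping on $K_c$: one must verify simultaneously that (i) every $O(b_n)$ term generated by the drift of $G_n$ is cancelled by the perturbation \eqref{eqn:perturbation_functions}, (ii) the contributions that diverge in $y$ all carry the dissipative sign, so that they can be absorbed rather than merely estimated, and (iii) the $\varepsilon$-order terms that genuinely survive are matched by the correction terms $\tfrac{\varepsilon}{2}\vn{f'}$ and $\varepsilon^2$ inserted into $H_\dagger$ and $H_\ddagger$ for exactly this reason. The reduction to the fixed compact $K_c$ via \eqref{estimate:lmm:cutoff_functions} is what makes all the $o(1)$ statements uniform and hence legitimate.
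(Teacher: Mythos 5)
Your overall strategy coincides with the paper's (use $f_n^{\varepsilon,\pm}$ as witnesses, then expand $H_n$), but you have misidentified what Definition~\ref{definition:subLIM_superLIM} actually asks for, and as a result there is a genuine gap. After citing Lemma~\ref{lemma:properties_fepsilon}\eqref{lemma:item:fepsilon_to_a}--\eqref{lemma:item:fepsilon_to_e} you assert that ``only the Hamiltonian estimate remains'' and then state a condition of the form
\[
\limsup_{n}\ \sup_{\{f_n^{\varepsilon,+}\le c\}} \bigl(H_n f_n^{\varepsilon,+} - g_\dagger\bigr) \le 0 ,
\]
which resembles the hypothesis of Lemma~\ref{lemma:extension_results}, not the definition of $ex$-$\subLIM$. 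Definition~\ref{definition:subLIM_superLIM} requires, in addition to the sublevel-set/compact limsup bound \eqref{eqn:sublim_generators_upperbound}, the \emph{global} uniform bounds \eqref{eqn:convergence_condition_sublim_uniform_gn}: $\sup_n \sup_{(x,y)\in E_n} H_n f_n^{\varepsilon,+}(x,y) < \infty$ (a uniform upper bound over all of $E_n$, not just over sublevel sets) and $\sup_n v_n^{-1} \log \|H_n f_n^{\varepsilon,+}\| < \infty$ for a suitable choice of $(v_n)$. Your reduction to the compact $K_c$ via \eqref{estimate:lmm:cutoff_functions} is precisely a reduction to a sublevel set, and once you are on $K_c$ everything is $o(1)$ -- but this says nothing about what $H_n f_n^{\varepsilon,+}$ does at points $(x,y)$ that are not confined, which can have $|x|$ of order $b_n^{1/4}$ and $|y|$ of order $\log^{1/2} b_n^{1/2}$ before the cut-off activates.

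This is where the real work in the paper's proof lies. There one splits into the region where the cut-off is fully constant, so that $H_n f_n^{\varepsilon,+}\equiv 0$, and the intermediate region where the cut-off is partly active; in the latter, one must establish the expansion \eqref{eqn:Hnfn_epsilon_expansion}, invoke Lemma~\ref{lemma:control_epsilon} to control $\varepsilon_n(y)$ up to $|y|\lesssim \log^{1/2} b_n^{1/2}$, and crucially perform a sign analysis: $-x^4/(\sigma^4(1+x^2))\le 0$, the $\varepsilon^2$-terms in $x$ are bounded, and the $y^2$ contributions combine (for $n > N_2$, cf.~\eqref{def:N_2}) to a non-positive expression because the dissipative $-\tfrac{2\varepsilon b_n^2 y^2}{\sigma^2}$ dominates the $8\varepsilon^2\sigma^2 y^2\bigl[\tfrac{b_n^4}{n}\chi_n'' + (\chi_n')^2\bigr]$. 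This sign argument cannot be replaced by an $o(1)$ estimate because the individual pieces blow up on the intermediate region. Finally, the $v_n$-condition is obtained there by noting $\|H_n f_n^{\varepsilon,+}\| \lesssim b_n^2\log b_n$ and choosing $v_n = b_n$. Neither of these two ingredients appears in your proposal, so the proof as written does not establish membership in $ex$-$\subLIM_n H_n$.
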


\begin{proof}
We prove only the first statement, i.e. $H_\dagger \subseteq ex-\subLIM_n  H_n$. Fix $\varepsilon > 0$ and $f \in C_c^4(\bR)$. Set $f_n := f_n^{\varepsilon,+}$ as in Lemma \ref{lemma:properties_fepsilon}.  We show that $(f(x) + \varepsilon(y^2 + \log(1+x^2)),Hf(x)+\tfrac{\varepsilon }{2} \vn{f'} + \varepsilon^2)$ is approximated by $(f_n,H_nf_n)$ as in Definition \ref{definition:subLIM_superLIM}(a). Since \eqref{eqn:convergence_condition_sublim_constants} was proved in Lemma~\ref{lemma:properties_fepsilon}\eqref{lemma:item:fepsilon_to_e}, we are left to check conditions \eqref{eqn:convergence_condition_sublim_uniform_gn} and \eqref{eqn:sublim_generators_upperbound}.
\begin{itemize}
\item[{\footnotesize \eqref{eqn:convergence_condition_sublim_uniform_gn}}]
We start by showing that we can get a uniform (in $n$) upper bound for the function $H_n f_n^{\varepsilon,+}$. To avoid trivialities, we consider the sequence for $n \geq N_{\star}$.
\begin{itemize}
\item 
If $\vert F_{n,f}(x,y) +\varepsilon \left(y^2 + F_{n,g}(x,y) \right)\vert \geq \sigma^2 \log b_n^{1/2}$, then the function $f_n^{\varepsilon,+}$ is constant and therefore $H_n f_n^{\varepsilon,+} \equiv 0$. 
\item
If $\vert F_{n,f}(x,y) +\varepsilon \left(y^2 + F_{n,g}(x,y) \right)\vert < \sigma^2 \log b_n^{1/2}$, the variables $x$ and $y$ are at most of order $b_n^{1/4}$ and $\log^{1/2} b_n^{1/2}$ respectively and we can characterize $H_n f_n^{\varepsilon,+}$ by means of \eqref{eqn:Hamiltonian_expansion}, since we can control the remainder term. Indeed,
\begin{itemize}
\item 
by Lemma \ref{lemma:control_epsilon}, we control $\varepsilon_n(y)$ up to $y$'s of order $\log^{1/2} b_n^{1/2}$;
\item 
the function $f$ is constant outside a compact set and thus has zero derivatives outside such a compact set;
\item by smoothness of the cut-off \eqref{eqn:cut-off}, the derivatives $\chi_n'$ and $\chi_n''$ are bounded.
\end{itemize}

We thus find
\begin{multline}\label{eqn:Hnfn_epsilon_expansion}
H_n f_n^{\varepsilon,+} (x,y) = \left[ -\frac{x^3}{2\sigma^4} f'(x) - \varepsilon \left( \frac{x^4}{\sigma^4(1+x^2)} + \frac{2 b_n^2 y^2}{\sigma^2}\right) \right] \chi_n'(-) \\
+ \frac{1}{2} \left[ \left( f'(x)\right)^2 + \frac{4 \varepsilon^2 x^2}{(1+x^2)^2} \right] \left( \chi_n'(-) \right)^2 \\
+ 8 \varepsilon^2 \sigma^2 y^2 \left[ \frac{b_n^4}{n} \chi_n''(-) + \left( \chi_n'(-) \right)^2 \right] +  Q_n(x,y)
\end{multline}
and $\sup_{(x,y)} \left\vert Q_n(x,y) \right\vert \leq c_0$, for a suitable positive constant $c_0$, independent of $n$ and $\varepsilon$. Observe that the remainder term $Q_n(x,y)$ collects all the smaller order contributions coming from $F_{n,f}(x,y)$, $F_{n,g}(x,y)$ and $y^2$. \\
We want to show that \eqref{eqn:Hnfn_epsilon_expansion} is uniformly bounded from above. The terms involving $f$ are ok, since $f \in C_c^4(\mathbb{R})$ implies that there exists a positive constant $M$ such that $f'$ vanishes at \mbox{$x \notin [-M,M]$}. The function $-\frac{x^4}{\sigma^4(1+x^2)}$ is non-positive and the term $\frac{2x^2}{(1+x^2)^2}$ is bounded from above by $2$. Moreover, if we set
\begin{equation}\label{def:N_2}
N_2 := \sup \left\{ n \in \mathbb{N} \left\vert \, -\frac{2b_n^2}{\sigma^2} + 8 \sigma^2 \left[ \frac{b_n^4}{n} \chi_n''(-) + \big( \chi_n'(-) \big)^2 \right] > 0 \right.\right\},
\end{equation}
we obtain that $-\frac{2 b_n^2 y^2}{\sigma^2} + 8 \sigma^2 \left[ \frac{b_n^4}{n} \chi_n''(-) + \big( \chi_n'(-) \big)^2 \right] y^2$ is uniformly bounded from above, for all $n > N_2$. By definition \eqref{def:N_star}, $N_{\star} \geq N_2$. Therefore, we can find a positive constant $c_1$ (dependent on $M$ and $\sigma$, but not on $n$ and $\varepsilon$) such that \mbox{$H_n f_n^{\varepsilon,+} (x,y) \leq c_1$}.
\end{itemize}

To conclude, observe that, since there exists a positive constant $c_2$ (independent of $n$) such that $\vert H_n f_n^{\varepsilon,+} \vert \leq c_2 \, b_n^2 \log b_n + c_0$ (cf. equation \eqref{eqn:Hnfn_epsilon_expansion}), choosing the sequence $v_n := b_n$ leads to $\sup_n v_n^{-1} \log \left\| H_n f_n^{\varepsilon,+} \right\| < \infty$.
\item[{\footnotesize \eqref{eqn:sublim_generators_upperbound}}]
Let $K$ be a compact set. Consider an arbitrary converging sequence $(x_n, y_n) \in K$ and let $(x,y) \in K$ be its limit. We want to show $\limsup_n H_n f_n^{\varepsilon,+}(x_n,y_n) \leq Hf(x)$.

As a converging sequence is bounded, by Lemma~\ref{lemma:properties_fepsilon}\eqref{lemma:item:fepsilon_to_d} we can find a sufficiently large $N=N(K) \in \mathbb{N}$ such that, for all $n \geq N$, we have 
\[
f_n^{\varepsilon,+} (x_n,y_n) = F_{n,f}(x_n,y_n) + \varepsilon \left( y_n^2 + F_{n,g}(x_n,y_n) \right).
\]
Thus, for any $n \geq N$, it yields 
\begin{multline*}
H_n f_n^{\varepsilon,+}(x_n,y_n) \leq -\frac{x_n^3}{2\sigma^4} f'(x_n) + \frac{1}{2} \left( f'(x_n) \right)^2\\
%
%
+ \varepsilon \left( \frac{x_n}{1+x_n^2} f'(x_n) -\frac{x_n^4}{\sigma^4(1+x_n^2)} - \frac{2 b_n^2 y_n^2}{\sigma^2} \right)  \\
%
%
+ \varepsilon^2 \left( \frac{2x_n^2}{(1+x_n^2)^2} + 8 \sigma^2 y_n^2 \right) + Q_n(x_n,y_n).
\end{multline*}
Using that $x(1+x^2)^{-1} \leq 1/2$, we find
\begin{equation*}
H_n f_n^{\varepsilon,+}(x_n,y_n) \leq Hf(x_n) +  \frac{\varepsilon}{2} \vn{f'} + \varepsilon^2 + \varepsilon y_n^2 \left[ 8 \varepsilon \sigma^2 - \frac{2 b_n^2}{\sigma^2}\right] + Q_n(x_n,y_n),
\end{equation*}
where the remainder term $Q_n$ converges to zero uniformly on compact sets. Since $b_n \uparrow \infty$, the conclusion follows.
\end{itemize}
\end{proof}

At this point we are ready to complete the definition of the sequences $\{f_n^{\varepsilon,\pm}\}_{n \in \mathbb{N}^*}$ by defining the index $N_{\star}$. We set
\begin{equation}\label{def:N_star}
N_{\star} := N_1 \vee N_2,
\end{equation}
with $N_1$ and $N_2$ given respectively in Lemma~\ref{lmm:cutoff_functions}\eqref{item_2:lmm:cutoff_functions} and in \eqref{def:N_2}. To conclude this section we obtain the Hamiltonian extensions.

\begin{proposition} \label{proposition:subsuper_extensions}
Consider notation as in Definition~\ref{definition:limiting_upper_lower_hamiltonians}. Moreover, set $\hat{H}_\dagger := H_\dagger \cup H$ and $\hat{H}_\ddagger := H_\ddagger \cup H$. Then $\hat{H}_\dagger$ is a sub-extension of $H_\dagger$ and $\hat{H}_\ddagger$ is a super-extension of $H_\ddagger$.
\end{proposition}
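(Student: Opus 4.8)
The plan is to reduce everything to the abstract sufficient condition for sub- and super-extensions recorded in Appendix~\ref{appendix:large_deviations_for_projected_processes} (the analogue of the corresponding lemma in \cite{CoKr18}). Since $H_\dagger \subseteq \hat H_\dagger = H_\dagger \cup H$ and $H_\ddagger \subseteq \hat H_\ddagger = H_\ddagger \cup H$ hold by construction, the only thing to check is that each new pair $(\phi, H\phi)$ with $\phi \in C_c^\infty(\bR)$ (viewed as a function of $(x,y)\in\bR^2$ independent of $y$) can be approximated from within $H_\dagger$ (resp. $H_\ddagger$). Definition~\ref{definition:limiting_upper_lower_hamiltonians} already supplies the approximants: for $\varepsilon \in (0,1)$ put $\Phi_\varepsilon(x,y) := \phi(x) + \varepsilon(y^2 + \log(1+x^2))$ and $\Psi_\varepsilon(x,y) := \phi(x) - \varepsilon(y^2 + \log(1+x^2))$, so that $\big(\Phi_\varepsilon,\, H\phi + \tfrac{\varepsilon}{2}\vn{\phi'} + \varepsilon^2\big) \in H_\dagger$ and $\big(\Psi_\varepsilon,\, H\phi - \tfrac{\varepsilon}{2}\vn{\phi'} - \varepsilon^2\big) \in H_\ddagger$, with $\Phi_\varepsilon \downarrow \phi$, $\Psi_\varepsilon \uparrow \phi$ pointwise and the corrections $\pm(\tfrac{\varepsilon}{2}\vn{\phi'} + \varepsilon^2) \to 0$ as $\varepsilon \downarrow 0$.

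The decisive point is that $W(x,y) := y^2 + \log(1+x^2)$ is coercive — its sublevel sets $\{W \le c\}$ are compact — and this is precisely the Lyapunov function underlying the exponential compact containment in Section~\ref{Subsct:exponential_compact_containment}. So I would fix $\lambda > 0$, $h \in C_b(\bR)$ and a viscosity subsolution $u$ of $f - \lambda H_\dagger f = h$ (hence bounded from above), and test it against $\Phi_\varepsilon \in \cD(H_\dagger)$. Since $\phi$ is bounded and $\varepsilon W \to +\infty$ at infinity, $u - \Phi_\varepsilon$ is bounded above and tends to $-\infty$, so it attains its maximum at some $z_\varepsilon = (x_\varepsilon, y_\varepsilon)$. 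Comparing the value there with the value at a fixed maximizer $(x_0,y_0)$ of $u - \phi$ gives $\varepsilon W(z_\varepsilon) \le \varepsilon W(x_0,y_0)$, which both pins $z_\varepsilon$ inside the fixed compact set $\{W \le W(x_0,y_0)\}$ and forces $\varepsilon W(z_\varepsilon) \to 0$; passing to a subsequence $z_\varepsilon \to z^\ast = (x^\ast, y^\ast)$, the maximality of $z_\varepsilon$ then makes $z^\ast$ a maximizer of $u - \phi$ with $u(z_\varepsilon) \to u(z^\ast)$. The subsolution inequality for $H_\dagger$ at $z_\varepsilon$ reads
\[
u(z_\varepsilon) - \lambda\Big( H\phi(x_\varepsilon) + \tfrac{\varepsilon}{2}\vn{\phi'} + \varepsilon^2\Big) - h(x_\varepsilon) \le 0,
\]
and letting $\varepsilon \downarrow 0$ (continuity of $H\phi$ and $h$, plus the convergences above) yields $u(z^\ast) - \lambda H\phi(x^\ast) - h(x^\ast) \le 0$, i.e. the subsolution inequality for $H$ at a maximizer of $u - \phi$. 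This is exactly the hypothesis of the appendix criterion, so $u$ is also a subsolution of $f - \lambda \hat H_\dagger f = h$, and $\hat H_\dagger$ is a sub-extension of $H_\dagger$.

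The super-extension claim would be handled symmetrically: for a viscosity supersolution $v$ of $f - \lambda H_\ddagger f = h$ (bounded from below), the function $v - \Psi_\varepsilon = (v - \phi) + \varepsilon W$ is bounded below and coercive, so it attains its minimum on a sublevel set of $W$ independent of $\varepsilon$, and passing to the limit in the supersolution inequality attached to $\big(\Psi_\varepsilon,\, H\phi - \tfrac{\varepsilon}{2}\vn{\phi'} - \varepsilon^2\big) \in H_\ddagger$ produces the supersolution inequality for $H$. I expect the main obstacle to be exactly the uniform-in-$\varepsilon$ confinement of the perturbed optimizers $z_\varepsilon$ and the identification of their subsequential limit as a genuine optimizer of $u - \phi$ (resp. $v - \phi$); both are immediate from the coercivity of $W$, which is the whole reason $W$ was built into the definitions of $H_\dagger$ and $H_\ddagger$ in the first place. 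Everything else reduces to taking limits in a single scalar inequality.
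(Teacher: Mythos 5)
Your proposal and the paper diverge at the very first step. The paper's proof is a two-line application of the abstract Lemma~\ref{lemma:extension_results} (Lemma~7.6 of \cite{FeKu06}): one checks that $\vn{\Phi_\varepsilon \wedge c - \phi \wedge c} \to 0$ for all $c$ (this is exactly Lemma~\ref{lemma:properties_fepsilon}\eqref{lemma:item:fepsilon_to_f}) and that $\vn{H_\dagger \Phi_\varepsilon - H\phi} = \tfrac{\varepsilon}{2}\vn{\phi'} + \varepsilon^2 \to 0$, and the abstract lemma delivers the sub-extension outright. You announce this plan at the outset, but the body of your argument does not verify the hypotheses of that lemma — instead you re-derive its conclusion from scratch by a direct penalization argument in the viscosity-solution framework (fix a subsolution $u$, test against $\Phi_\varepsilon$, pin down the perturbed optimizers $z_\varepsilon$, pass to the limit). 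That is a genuinely different, more hands-on route: you are essentially re-proving the relevant special case of Lemma~7.6 rather than invoking it. The sentence ``This is exactly the hypothesis of the appendix criterion'' is therefore misleading: what you produce is the \emph{conclusion} of the criterion, not a verification of its hypotheses (which concern uniform convergence of truncations, not viscosity arguments).

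Beyond the organizational mismatch, there is one concrete gap in your direct argument. You compare the penalized optimizer $z_\varepsilon$ ``with the value at a fixed maximizer $(x_0,y_0)$ of $u-\phi$.'' But $u$ is only bounded and upper semi-continuous, $\phi \in C_c^\infty(\bR)$, and the ambient space $\bR^2$ is non-compact, so $\sup(u-\phi)$ need not be attained; the existence of such an $(x_0,y_0)$ is exactly what the penalization is meant to compensate for, not something you may assume. The argument is repairable: replace $(x_0,y_0)$ by a near-maximizer with slack $\delta$, deduce $\varepsilon W(z_\varepsilon) \le \delta + \varepsilon W(x_0,y_0)$, send $\varepsilon \downarrow 0$ and then $\delta \downarrow 0$ to conclude $\varepsilon W(z_\varepsilon) \to 0$ and $u(z_\varepsilon) - \phi(x_\varepsilon) \to \sup(u-\phi)$. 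Note that along the way you lose the claim that $z_\varepsilon$ stays in a fixed compact set; fortunately the subsolution definition \ref{def:viscosity_solution} only asks for a sequence along which the sup is approached and the inequality holds, so confinement of $z_\varepsilon$ is not actually needed. One further small point: that definition gives the subsolution inequality along a sequence approaching $\sup(u-\Phi_\varepsilon)$, not automatically at the argmax $z_\varepsilon$; since $\Phi_\varepsilon$ is continuous and the superlevel sets of $u-\Phi_\varepsilon$ are compact, you can upgrade to the argmax by extracting a convergent subsequence, but this too deserves a sentence. The paper's route avoids all of these subtleties by delegating them to \cite{FeKu06}.
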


\begin{proof}
We prove only that $\hat{H}_\dagger$ is a sub-extension of $H_\dagger$. We use the first statement of Lemma \ref{lemma:extension_results}. Let $f \in \cD(H)$. We must show that $(f,Hf)$ is appropriately approximated  by elements in the graph of $H_\dagger$.\\ 
For any $n \in \mathbb{N}^*$, set $\varepsilon(n) = n^{-1}$ and consider the function $f_n (x,y) = f(x) + \varepsilon(n) \big( y^2 + \log(1+x^2) \big)$, with $H_\dagger f_n = Hf+ \frac{\varepsilon}{2} \|f'\|+\varepsilon^2$. From Lemma \ref{lemma:properties_fepsilon}\eqref{lemma:item:fepsilon_to_f}  we obtain that $\vn{f_n \wedge c - f \wedge c} \rightarrow 0$, for every $c \in \mathbb{R}$. In addition, as $Hf \in C_b(\bR)$, we have $\vn{H_\dagger f_n - Hf} \rightarrow 0$. This concludes the proof.
\end{proof}

\subsection{Exponential compact containment}
\label{Subsct:exponential_compact_containment}

The last open question we must address consists in verifying exponential compact containment for the fluctuation process. The validity of the compactness condition will be shown in Proposition~\ref{proposition:exponential_compact_containment} below.
We start with an informal discussion on the validity of this property. \\
Recall that the sequence of processes $\left(\sqrt{n} \left(n^{-1}T_n(t) - \sigma^2\right), t \geq 0 \right)$ converges weakly to the solution of \eqref{eqn:OU_process}. Thus, speeding up time by a factor $b_n^2$, we find that the process $\sqrt{n} \left(n^{-1}T_n(t) - \sigma^2\right)$ has roughly equilibrated as a centered normal random variable with variance $2 \sigma^4$. This implies that, for any $a > 0$, the tail probability $\PR\left[b_n \left(n^{-1}T_n(t) - \sigma^2\right) \geq a\right]$ scales like
	\begin{equation}\label{eqn:Gaussian_tail}
	\int_a^\infty \frac{1}{2\sigma^2\sqrt{\pi}} \frac{\sqrt{n}}{b_n} e^{- \frac{n}{b_n^2}\frac{y^2}{4 \sigma^4}} \dd y.
	\end{equation}
	By Lemma 2 in \cite[Sect.~7.1]{Fel68}, \eqref{eqn:Gaussian_tail} is bounded above by
	\begin{equation*}
	\frac{1}{2a\sigma^2 \sqrt{\pi}} \frac{b_n}{\sqrt{n}} \exp \left\{- \frac{n}{b_n^2} \frac{a^2}{4\sigma^4}\right\},
	\end{equation*}
which is indeed decaying on an exponential scale that is faster than $n b_n^{-4}$. As a consequence, it is the dynamics of the process $\left( b_n n^{-1} S_n(b_n^2 t), t \geq 0 \right)$ that needs to be properly controlled, as well as the interplay between the two processes.
	
To do so, we use a Lyapunov argument based on \cite[Lem.~4.22]{FeKu06} (included for completeness as Lemma \ref{lemma:compact_containment_FK}). We start by proving an auxiliary lemma showing that the function $(x,y) \mapsto \frac{1}{2}\left(y^2 + \log (1 +x^2)\right)$ is appropriate for this purpose, whenever we carry out the appropriate perturbation and cut-off as in the previous section.

\begin{lemma}\label{lemma:compact_containment_in_n}
Let $G \subseteq \mathbb{R}^2$ be a relatively compact open set. Consider the cut-off introduced in \eqref{eqn:cut-off} and define
\[
\Upsilon_n (x,y) = \chi_n \left[ \frac{1}{2} \left( y^2 + F_{n, g} (x,y) \right) \right],
\]
with $F_{n,\bullet}$ as in \eqref{eqn:perturbation_definition}, \eqref{eqn:perturbation_functions} and  $g(x) = \log(1+x^2)$. Then, we have
\[
\limsup_{n \uparrow \infty} \sup_{(x,y) \in G \cap E_n} H_n \Upsilon_n (x,y) \leq \frac{1}{4}.
\]
\end{lemma}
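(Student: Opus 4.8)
The strategy is to compute $H_n \Upsilon_n$ using the expansion of the Hamiltonian from \eqref{eqn:Hamiltonian_expansion}, exactly as was done for $f_n^{\varepsilon,+}$ in the proof of Proposition~\ref{proposition:limiting_hamiltonians}, but now with the explicit function $\frac{1}{2}(y^2 + g(x))$ in place of $f(x) + \varepsilon(y^2 + g(x))$ and without a free small parameter. First I would split into two regimes according to whether the argument $\frac{1}{2}(y^2 + F_{n,g}(x,y))$ exceeds the cut-off threshold $\sigma^2 \log b_n^{1/2}$ or not. In the first case $\Upsilon_n$ is locally constant, so $H_n \Upsilon_n \equiv 0 \leq \frac14$ there and there is nothing to prove. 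In the second case the argument is bounded, which forces $|y|$ to be at most of order $\log^{1/2} b_n^{1/2}$ and $x$ to be controlled (here $F_{n,g}(x,y) = g(x) + b_n^{-1}\Gamma_g(x,y) + b_n^{-2}\Lambda_g(x,y)$ with $g(x) = \log(1+x^2)$ growing, and $|xg'(x)| \le 1$, $|x(3g'(x)+xg''(x))| \le 8$ give the needed bounds on the perturbative corrections), so that Lemma~\ref{lemma:control_epsilon} applies and the remainder $R^f_n$ in \eqref{eqn:Hamiltonian_expansion_remainder} is uniformly small.

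Next I would substitute $f = \tfrac12 g$, $\Gamma_f = \tfrac12\Gamma_g$, $\Lambda_f = \tfrac12\Lambda_g$ together with the extra $\tfrac12 y^2$ term into the expansion \eqref{eqn:Hamiltonian_expansion}. The first-order term $-\frac{x^3}{2\sigma^4}(\tfrac12 g'(x))\chi_n'$ is bounded because $|x^3 g'(x)| = |2x^4/(1+x^2)| \le $ isn't bounded—wait, so this term is handled differently: $-\frac{x^3}{2\sigma^4} \cdot \frac{x}{1+x^2}$ is negative for the relevant sign, i.e. $-\frac{x^4}{\sigma^4(1+x^2)} \le 0$, so it only helps. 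The genuinely dangerous contributions are the diverging $y$-terms: the drift term $-\frac{b_n^2 y}{\sigma^2}\frac{\partial}{\partial y}[\tfrac12 y^2 + \cdots] = -\frac{b_n^2 y^2}{\sigma^2} + O(b_n)$ (strongly negative), against the square-gradient term $2\sigma^2(\frac{\partial}{\partial y}[\tfrac12 y^2])^2 = 2\sigma^2 y^2$, plus the cut-off derivative contributions $\frac{b_n^4}{2n}(\cdots)\chi_n'' + (\cdots)(\chi_n')^2$ coming from the second-order part of $H_n$ applied through the chain rule on $\chi_n$. The key point is that on the support of $\chi_n'$ and $\chi_n''$ one has $\chi_n' \le 1$ (increasing, slope $\le 1$) and the $\chi_n''$ term is killed by $b_n^4/n \downarrow 0$, so for $n$ large the net $y^2$-coefficient $-\frac{b_n^2}{\sigma^2} + (\text{bounded})$ is negative; hence all $y$-dependent terms are $\le 0$ for $n$ large. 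What remains after dropping the non-positive pieces is $\frac12 (\tfrac12 g'(x))^2 (\chi_n')^2 \le \frac18 (g'(x))^2 = \frac{x^2}{2(1+x^2)^2} \le \frac18$, plus the $\tfrac12(f'(x))^2$-type square-gradient term from the $x$-derivative — and crucially the leading linear-in-$x$-derivative term $-\frac{x^4}{\sigma^4(1+x^2)}\chi_n' \le 0$. Collecting, $\limsup_n \sup_{G \cap E_n} H_n \Upsilon_n \le \frac18 \le \frac14$, with the extra slack absorbing the vanishing remainder.

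The main obstacle I anticipate is bookkeeping the cut-off derivatives cleanly: when $\chi_n$ is nontrivially curved (on the thin transition layers), $H_n \Upsilon_n$ picks up terms proportional to $\chi_n'(\cdot) H_n[\tfrac12(y^2 + F_{n,g})]$ and $\chi_n''(\cdot)\,\Gamma(\cdot)$ where $\Gamma$ is the carré-du-champ of the inner function, and one must check that on the transition region (where $\frac12(y^2 + F_{n,g})$ is of order $\log b_n^{1/2}$, hence $|y|$ of order $\log^{1/2} b_n^{1/2}$ and $x$ bounded or at most polynomially large) these are still dominated by the strongly negative $-\frac{b_n^2 y^2}{\sigma^2}$ term, or else are $O(1)$ and swallowed by the $\frac14 - \frac18$ slack. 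Since $\chi_n'$ and $\chi_n''$ are bounded uniformly in $n$ (the cut-off only rescales its plateaus, not its slope), and since $b_n^4/n \to 0$ controls the only place where a large prefactor multiplies $\chi_n''$, this goes through. I would therefore structure the write-up as: (i) reduce to the non-constant regime; (ii) invoke the expansion \eqref{eqn:Hamiltonian_expansion} and Lemma~\ref{lemma:control_epsilon} to write $H_n \Upsilon_n$ explicitly with a uniformly vanishing remainder; (iii) observe that all $y$-dependent coefficients are eventually non-positive (using $b_n^4 n^{-1}\downarrow 0$ and $\chi_n' \le 1$); (iv) bound the surviving $x$-terms by $\frac14(g'(x))^2 \le \frac14$ and conclude.
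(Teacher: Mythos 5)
Your proposal is correct, but it takes a substantially longer route than the paper. The paper's proof is a one-liner: it observes that $\Upsilon_n$ is \emph{exactly} the test function $f_n^{\varepsilon,+}$ from Lemma~\ref{lemma:properties_fepsilon} with $f \equiv 0$ and $\varepsilon = \tfrac{1}{2}$ (since $F_{n,0} \equiv 0$, the argument of $\chi_n$ reduces to $\tfrac{1}{2}(y^2 + F_{n,g})$), and then simply invokes Proposition~\ref{proposition:limiting_hamiltonians}, i.e.\ the inclusion $H_\dagger \subseteq ex\text{-}\subLIM_n H_n$. Condition \eqref{eqn:sublim_generators_upperbound}, combined with relative compactness of $G$ (extract a convergent subsequence of near-maximizers), upgrades the pointwise $\limsup$ bound to a bound on $\limsup_n \sup_{G \cap E_n}$. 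The constant $\tfrac{1}{4}$ is then nothing but $Hf + \tfrac{\varepsilon}{2}\|f'\| + \varepsilon^2 = 0 + 0 + (\tfrac{1}{2})^2 = \tfrac{1}{4}$, read off from Definition~\ref{definition:limiting_upper_lower_hamiltonians}. What you do instead is re-derive the estimates of Proposition~\ref{proposition:limiting_hamiltonians}'s proof specialized to this choice: you correctly identify that the designed cancellations from $\Gamma_g$ and $\Lambda_g$ kill the $O(b_n)$ and $O(1)$ $y$-drift terms, that the surviving $y^2$-coefficient becomes non-positive once $b_n^2/\sigma^2$ dominates the bounded $\chi_n', \chi_n''$ contributions (this is exactly what the index $N_2$ in \eqref{def:N_2} encodes), that the linear $x$-drift term $-\tfrac{x^4}{2\sigma^4(1+x^2)}\chi_n'$ has a good sign, and that the surviving square-gradient term is bounded by $\tfrac{1}{8}(g'(x))^2 \leq \tfrac{1}{8} < \tfrac{1}{4}$. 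Your bound is in fact slightly sharper than the stated $\tfrac{1}{4}$ (the paper's constant has slack because $H_\dagger$ was padded with $\varepsilon^2$ rather than $\varepsilon^2/2$), which is harmless. In short: your argument buys nothing over the paper's — it is a correct but redundant unpacking of a proposition you already have at your disposal. The lesson here is that $\Upsilon_n$ was deliberately written in the form $\chi_n[\tfrac{1}{2}(y^2 + F_{n,g})]$ precisely to make it a member of the family $\{f_n^{\varepsilon,+}\}$, so the efficient move is to recognize and exploit that.
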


\begin{proof}
This follows immediately from the statement $H_\dagger \subseteq ex-\subLIM_n H_n$ proved in Proposition~\ref{proposition:limiting_hamiltonians}. Namely, one can consider $f \equiv 0$ and $\varepsilon = \frac{1}{2}$. 
\end{proof}

\begin{proposition} \label{proposition:exponential_compact_containment}
Assume that the sequence $(b_n n^{-1} S_n(0), b_n (n^{-1} T_n(0) - \sigma^2))$ is exponentially tight at speed $nb_n^{-4}$. Then, the processes 
\[
((X_n(t),Y_n(t)),t \geq 0) := \left(\left(b_n n^{-1} S_n(b_n^2t), b_n \left( n^{-1}T_n(b_n^2 t) - \sigma^2 \right)\right), t \geq 0 \right)
\] 
satisfy the exponential compact containment condition at speed $nb_n^{-4}$. In other words, for every compact set $K \subseteq \bR^2$, every constant $a \geq 0$ and time $T \geq 0$, there exists a compact set \mbox{$K' = K'(K,a,T) \subseteq \bR^2$} such that
\begin{equation*}
\limsup_{n \uparrow \infty} \sup_{(x,y) \in K \cap E_n} b_n^4 n^{-1} \log \PR \left[ (X_n(t),Y_n(t)) \notin K' \text{ for some } t \leq T \, | \, (X_n(0),Y_n(0)) = (x,y) \right] \leq - a.
\end{equation*}
\end{proposition}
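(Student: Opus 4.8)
The plan is to invoke the abstract Lyapunov-function criterion for exponential compact containment, namely Lemma~\ref{lemma:compact_containment_FK} (i.e.\ \cite[Lem.~4.22]{FeKu06}), with the containment function provided by the cut-off perturbations $\Upsilon_n$ of Lemma~\ref{lemma:compact_containment_in_n}. First I would fix a compact set $K\subseteq\bR^2$, a level $a\geq 0$, and a horizon $T\geq 0$. The function $W(x,y):=\tfrac12(y^2+\log(1+x^2))$ is nonnegative, has compact sublevel sets $\{W\leq c\}$, and its cut-off versions $\Upsilon_n=\chi_n[\tfrac12(y^2+F_{n,g}(x,y))]$ lie in $\cD(H_n)$, agree with $W$ (up to the vanishing $b_n^{-1},b_n^{-2}$ perturbative corrections) on any fixed compact set for $n$ large, and by Lemma~\ref{lemma:compact_containment_in_n} satisfy $\limsup_n\sup_{(x,y)\in G\cap E_n}H_n\Upsilon_n(x,y)\leq \tfrac14$ on every relatively compact open $G$.

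The second step is to choose the sublevel sets. Pick $c_0$ large enough that $K\subseteq\{W<c_0\}$, and then set $K'=K'(K,a,T):=\{W\leq c_1\}$ for a constant $c_1>c_0$ to be fixed. Take $G$ to be a bounded open neighbourhood of $K'$, say $G=\{W<c_1+1\}$, which is relatively compact. On $G$ we have the uniform bound $H_n\Upsilon_n\leq \tfrac14+o(1)\leq \tfrac12$ for $n$ large, so $\Upsilon_n$ is a genuine Lyapunov function for the generator $G_n$ in the sense required by Lemma~\ref{lemma:compact_containment_FK}: the process $\exp\{nb_n^{-4}(\Upsilon_n(X_n(t\wedge\tau),Y_n(t\wedge\tau)) - \int_0^{t\wedge\tau}H_n\Upsilon_n\,ds)\}$ (with $\tau$ the exit time from $G$) is a supermartingale up to the exit time, by the standard exponential-martingale / Dynkin argument applied to $e^{nb_n^{-4}\Upsilon_n}$. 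Since $\Upsilon_n\leq c_1$ on $G$ while $\Upsilon_n\geq c_0'$ (close to $c_0$) at the start when $(x,y)\in K$, an optional-stopping estimate gives
\[
\PR\!\left[(X_n,Y_n)\text{ exits }G\text{ before }T\mid (X_n(0),Y_n(0))=(x,y)\right]\leq \exp\!\left\{-nb_n^{-4}\big(c_1-c_0'-\tfrac12 T\big)\right\},
\]
uniformly over $(x,y)\in K\cap E_n$ for $n$ large. Choosing $c_1$ so that $c_1-c_0'-\tfrac12 T\geq a$ yields the claimed bound with $K'=\{W\leq c_1\}$, which is compact. Exit from the larger set $G$ dominates exit from $K'$, and one uses that $K'\subseteq G$ with $\Upsilon_n$ bounded away from its boundary values.

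The main obstacle is bookkeeping the discrepancy between $\Upsilon_n$ and the idealized Lyapunov function $W$: because $F_{n,g}$ differs from $g$ by terms of order $b_n^{-1}$ and $b_n^{-2}$ that are \emph{unbounded} in $(x,y)$ (they grow like $|y|$ and $y^2$), one must make sure the cut-off $\chi_n$ has kicked in before these corrections become comparable to $c_1$; this is exactly why the cut-off scale $\sigma^2\log b_n^{1/2}$ was chosen and why Lemma~\ref{lmm:cutoff_functions} controls where $\Upsilon_n$ is constant. Concretely I would note that outside $G$, for $n$ large, either $(x,y)$ is still in the region where $\Upsilon_n=W$ (so the Lyapunov inequality and the value comparison are clean), or $\chi_n$ has already flattened $\Upsilon_n$, in which case $H_n\Upsilon_n\equiv 0$ and the process cannot further increase $\Upsilon_n$, so it cannot have reached the flattened region without first crossing $\{W=c_1\}$ — giving the exit estimate anyway. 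A second, minor technical point is that the uniform-in-$n$ supermartingale property requires the $o(1)$ error in $H_n\Upsilon_n\leq\tfrac14+o(1)$ to be uniform over the bounded set $G$, which is precisely the content of Lemma~\ref{lemma:compact_containment_in_n} (the remainder $R^f_n$ in \eqref{eqn:Hamiltonian_expansion_remainder} converges to zero uniformly on compacts). Once these points are in place, the application of Lemma~\ref{lemma:compact_containment_FK} is routine.
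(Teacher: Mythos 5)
Your proof is correct and follows essentially the same route as the paper: the paper's own proof simply invokes Lemma~\ref{lemma:compact_containment_FK} with the cut-off Lyapunov functions $\Upsilon_n$ from Lemma~\ref{lemma:compact_containment_in_n}, and your write-up fills in the optional-stopping details that the paper delegates to the references it cites. One small correction: since Lemma~\ref{lemma:compact_containment_FK} controls the probability of \emph{leaving the open set} $G$, the compact $K'$ must be chosen to \emph{contain} $G$ (e.g.\ $K'=\overline{G}=\{W\leq c_1+1\}$), rather than the strictly smaller sublevel set $\{W\leq c_1\}\subsetneq G$; with $K'\subset G$ the inclusion of events goes the wrong way and the lemma does not bound $\PR[\text{exit } K']$.
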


\begin{proof}
The statement follows from Lemmas~\ref{lemma:compact_containment_in_n} and~\ref{lemma:compact_containment_FK} by choosing $f_n \equiv \Upsilon_n$ on a fixed, sufficiently large, compact set of $\mathbb{R}^2$. For similar proofs see e.g. \cite[Lem.~3.2]{DFL11}, \cite[Prop.~A.15]{CoKr17}.
\end{proof}

\subsection{Proof of Theorem~\ref{thm:path_space_MDP}}
\label{subsct:proof_main_theorem}

We check the assumptions of Theorem \ref{theorem:Abstract_LDP}. We use operators $H_\dagger$, $H_\ddagger$ as in Definition \ref{definition:limiting_upper_lower_hamiltonians} and limiting Hamiltonian $H \subseteq C_b(\bR) \times C_b(\bR)$, with domain $C_c^\infty(\bR)$, of the form $Hf(x) = H(x,f'(x))$ where
\[
H(x,p) = - \frac{x^3}{2\sigma^4}p + \frac{1}{2}p^2.
\]
We first verify Condition \ref{condition:H_limit}: \eqref{condition:H_limit:item_1} follows from Proposition~\ref{proposition:limiting_hamiltonians}, \eqref{condition:H_limit:item_2} is satisfied by definition and \eqref{condition:H_limit:item_3} follows from Proposition~\ref{proposition:subsuper_extensions}.

The comparison principle for $f- \lambda Hf = h$ for $h \in C_b(\bR)$ and $\lambda > 0$ has been verified in e.g. \cite[Prop.~3.5]{CoKr17}. Two things should be noted. The statement of the latter proposition is valid for $ f \in C_c^2(\bR)$, but the result generalizes straightforwardly to class $C_c^\infty(\bR)$ as the penalization and containment functions used in the proof are $C^\infty$. In addition, the proposition was stated for \textit{strong} viscosity solutions, but the proof of \cite[Prop.~3.5]{CoKr17} works for our notion of viscosity solutions as well. See the discussion following \cite[Def.~6.1 and Def.~7.1]{FeKu06} on the difference of the two notions of solutions.

Finally, the exponential compact containment condition follows from Proposition \ref{proposition:exponential_compact_containment}.

\appendix

\section{Appendix: path-space large deviations for a projected process} \label{appendix:large_deviations_for_projected_processes}

We turn to the derivation of the large deviation principle. We first introduce our setting.

\begin{assumption} \label{assumption:LDP_assumption}
Assume that, for each $n \in \mathbb{N}^*$, we have a Polish subset $E_n \subseteq \bR^2$ such that for each $x \in \bR^2$ there are $x_n \in E_n$ with $x_n \rightarrow x$.  Let $A_n \subseteq C_b(E_n) \times C_b(E_n)$ and existence and uniqueness holds for the $D_{E_n}(\bR^+)$ martingale problem for $(A_n,\mu)$ for each initial distribution $\mu \in \cP(E_n)$. Letting $\PR_{z}^n \in \cP(D_{E_n}(\bR^+))$ be the solution to $(A_n,\delta_z)$, the mapping $z \mapsto \PR_z^n$ is measurable for the weak topology on $\cP(D_{E_n}(\bR^+))$. Let $Z_n$ be the solution to the martingale problem for $A_n$ and set
\begin{equation*}
H_n f = \frac{1}{r_n} e^{-r_n f}A_n e^{r_n f} \qquad e^{r_n f} \in \cD(A_n),
\end{equation*}
for some sequence of speeds $(r_n)_{n \in \mathbb{N}^*}$, with $\lim_{n \uparrow \infty} r_n = \infty$.
\end{assumption}

Following the strategy of \cite{FeKu06}, the convergence of Hamiltonians $(H_n)_{n \in \mathbb{N}^*}$ is a major component in the proof of the large deviation principle. We postpone the discussion on how determining a limiting Hamiltonian $H$ due to the difficulties that taking the $n \uparrow \infty$ limit introduces in our particular context. We first focus on exponential tightness, an equally important aspect. 

\subsection{Compact containment condition}

Given the convergence of the Hamiltonians, to have exponential tightness it suffices to establish an exponential compact containment condition.

\begin{definition} \label{definition:compact_containment}
	We say that a sequence of processes $(Z_n(t),t \geq 0)$ on $E_n \subseteq \bR^2$ satisfies the exponential compact containment condition at speed $(r_n)_{n \in \mathbb{N}^*}$, with $\lim_{n \uparrow \infty} r_n = \infty$, if for all compact sets $K\subseteq \bR^2$,  constants $a \geq 0$ and times $T > 0$, there is a compact set $K' \subseteq \bR^2$ with the property that
	\begin{equation*}
	\limsup_{n \uparrow \infty} \sup_{z \in K} \frac{1}{r_n} \log \PR\left[Z_n(t) \notin K' \text{ for some } t \leq T \, \middle| \, Z_n(0) = z\right] \leq - a.
	\end{equation*}
\end{definition}

The exponential compact containment condition can be verified by using approximate Lyapunov functions and martingale methods. This is summarized in the following lemma. Note that exponential compact containment can be obtained by taking deterministic initial conditions.

\begin{lemma}[Lemma 4.22 in \cite{FeKu06}] \label{lemma:compact_containment_FK}
Suppose Assumption \ref{assumption:LDP_assumption} is satisfied. Let $Z_n(t)$ be a solution of the martingale problem for $A_n$ and assume that $(Z_n(0))_{n \in \mathbb{N}^*}$ is exponentially tight with speed $(r_n)_{n \in \mathbb{N}^*}$. Consider the compact set $K = [a,b] \times [c,d]$ and let $G \subseteq \mathbb{R}^2$ be open and such that $[a,b]\times [c,d] \subseteq G$. For each $n$, suppose we have $(f_n,g_n) \in H_n$. Define
	\begin{align*}
	\beta(q,G) &:= \liminf_{n \uparrow \infty} \left( \inf_{(x,y) \in G^c} f_n(x,y) - \sup_{(x,y) \in K} f_n(x,y)\right), \\
	\gamma(G) & := \limsup_{n \uparrow \infty} \sup_{(x,y) \in G} g_n(x,y).
	\end{align*}
	Then
	\begin{multline*} 
	\limsup_{n \uparrow \infty} \frac{1}{r_n} \log \PR\left[Z_n(t) \notin G \text{ for some } t \leq T  \right] \\
	\leq \max \left\{-\beta(q,G) + T \gamma(G), \limsup_{n \uparrow \infty} \PR\left[Z_n(0) \notin [a,b]\times[c,d] \right] \right\}.
	\end{multline*}
\end{lemma}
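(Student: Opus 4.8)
The plan is to run the standard exponential-martingale (Lyapunov) argument behind \cite[Lem.~4.22]{FeKu06}. Fix $n$ and write $\phi_n := e^{r_n f_n}$. By Assumption~\ref{assumption:LDP_assumption} the hypothesis $(f_n,g_n)\in H_n$ means $\phi_n\in\cD(A_n)$ and $A_n\phi_n = r_n g_n\phi_n$. Since $Z_n$ solves the martingale problem for $A_n$, the process $t\mapsto \phi_n(Z_n(t))-\int_0^t r_n g_n(Z_n(s))\phi_n(Z_n(s))\,\dd s$ is a martingale, and from it — either by a direct computation or by Itô's formula applied to the product with $\exp\{-r_n\int_0^t g_n(Z_n(s))\dd s\}$ — one obtains that
\[
N_n(t) := \exp\Big\{ r_n f_n(Z_n(t)) - r_n f_n(Z_n(0)) - r_n\int_0^t g_n(Z_n(s))\,\dd s \Big\}
\]
is a nonnegative local martingale with $N_n(0)=1$, hence a supermartingale. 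Here $g_n\in C_b(E_n)$ is bounded, which keeps the exponential correction term under control; any residual integrability issue is handled by localizing and using that $f_n$ is continuous, hence bounded on compact sets.

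Next, introduce the exit time $\tau_n := \inf\{t\ge 0 : Z_n(t)\notin G\}$, so that $\{\tau_n\le T\}=\{Z_n(t)\notin G\ \text{for some}\ t\le T\}$. By optional stopping of the supermartingale $N_n$ at $\tau_n\wedge T$,
\[
\bE\big[ N_n(\tau_n\wedge T)\,\bONE_{\{\tau_n\le T\}}\,\bONE_{\{Z_n(0)\in K\}} \big] \le \bE[N_n(\tau_n\wedge T)] \le 1 .
\]
On the event $\{\tau_n\le T\}\cap\{Z_n(0)\in K\}$ I bound the exponent in $N_n$ from below: $f_n(Z_n(\tau_n))\ge \inf_{G^c} f_n$ (the path has left $G$; the usual care with right-continuity is absorbed by replacing $G^c$ by its closure, which does not change $\beta$), $f_n(Z_n(0))\le\sup_K f_n$, and $\int_0^{\tau_n} g_n(Z_n(s))\,\dd s \le T\sup_G g_n$ since $Z_n(s)\in G$ for $s<\tau_n$. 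Hence on that event $N_n(\tau_n\wedge T)\ge \exp\{ r_n( \inf_{G^c} f_n - \sup_K f_n - T\sup_G g_n)\}$, and therefore
\[
\PR[\tau_n\le T,\ Z_n(0)\in K] \le \exp\Big\{ -r_n\big( \textstyle\inf_{G^c} f_n - \sup_K f_n - T\sup_G g_n\big)\Big\}.
\]

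Finally, split $\PR[\tau_n\le T]\le \PR[\tau_n\le T,\ Z_n(0)\in K] + \PR[Z_n(0)\notin K]$, apply $r_n^{-1}\log$, use $\log(a+b)\le\log 2+\max\{\log a,\log b\}$ together with $r_n^{-1}\log 2\to 0$, and pass to the $\limsup$ in $n$. Identifying $\liminf_n(\inf_{G^c}f_n-\sup_K f_n)=\beta(q,G)$ and $\limsup_n\sup_G g_n=\gamma(G)$ gives precisely the asserted bound, with the second term in the maximum being the contribution of the initial condition (the exponential tightness of $(Z_n(0))_n$ is not needed for this inequality itself, only to make that term manageable in applications). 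I expect the only delicate points to be the rigorous passage from ``nonnegative local martingale'' to ``supermartingale'' and the handling of $Z_n(\tau_n)$ for càdlàg paths that may jump across $\partial G$; both are routine via localization and the closure adjustment mentioned above.
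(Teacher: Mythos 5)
The paper does not reprove this lemma — it is quoted verbatim from Feng--Kurtz \cite[Lem.~4.22]{FeKu06} — so the only sensible comparison is with the standard proof of that reference, and your argument is precisely that argument: the multiplicative Dynkin/Doléans exponential $N_n$, the nonnegative-local-martingale-hence-supermartingale step, optional stopping at $\tau_n\wedge T$, splitting on $\{Z_n(0)\in K\}$, and the $\log(a+b)\le \log 2 + \max\{\log a,\log b\}$ bookkeeping. Two small remarks. First, the caveat about ``replacing $G^c$ by its closure'' is unnecessary: $G$ is open, so $G^c$ is already closed, and for a càdlàg process the exit value $Z_n(\tau_n)$ lies in $G^c$ on $\{\tau_n<\infty\}$. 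Second, the step $\int_0^{\tau_n} g_n(Z_n(s))\,\dd s \le \tau_n \sup_G g_n \le T\sup_G g_n$ silently uses $\sup_G g_n\ge 0$; if $\sup_G g_n<0$ one only obtains $\int_0^{\tau_n} g_n\,\dd s \le 0 = T(\sup_G g_n)^+$, so strictly speaking your argument delivers the bound with $T\,\gamma(G)^+$ in place of $T\,\gamma(G)$. This is the same implicit convention as in the cited source and is harmless in the application (Proposition~\ref{proposition:exponential_compact_containment} only needs $\gamma(G)$ bounded above), but it is worth stating.
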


\subsection{Operator convergence for a projected process}

In the papers \cite{Kr16b,CoKr17,DFL11} one of the main steps in proving the large deviation principle was proving directly the existence of an operator $H$ such that $H \subseteq \LIM_n H_n$; in other words, verifying that, for all $(f,g) \in H$, there are $f_n \in H_n$ such that $\LIM_n f_n = f$ and $\LIM_n H_n f_n = g$ (the notion of $\LIM$ is introduced in Definition~\ref{def:LIM}). Here it is hard to follow a similar strategy. We rather proceed as done in \cite{CoKr18}.\\
We are dealing with functions 
\[
f_n(x,y) = f(x) + b_n^{-1} f_1(x,y) + b_n^{-2} f_2(x,y) \qquad \mbox{ (for suitably chosen $f_1$ and $f_2$)}
\]
given in a perturbative fashion and satisfying intuitively $f_n \rightarrow f$ and $H_n f_n \rightarrow Hf$ with Hamiltonian $H \subseteq C_b(\bR) \times C_b(\bR)$ of the form \eqref{eqn:limiting_Hamiltonian}. The unboundedness of the state space $E_n$ causes that for most functions $f \in C_c^4(\mathbb{R})$: $\sup_n \vn{f_n} = \infty$, implying we do not have $\LIM f_n = f$. To circumvent this issue, we relax our definition of limiting operator.

In particular, we will work with two Hamiltonians $H_{\dagger}$ and $H_\ddagger$, that are limiting upper and lower bounds for the sequence of Hamiltonians $H_n$, respectively, and thus serve as natural upper and lower bounds for $H$. This extension allows us to consider unbounded functions in the domain and to argue with inequalities rather than equalities.

\begin{definition}[Definition 2.5 in \cite{FeKu06}]\label{def:LIM}
For $f_n \in C_b(E_n)$ and $f \in C_b(\bR^2)$, we will write $\LIM f_n = f$ if $\sup_n \vn{f_n} < \infty$ and, for all compact sets $K \subseteq \bR^2$,
\begin{equation*}
\lim_{n \uparrow \infty} \sup_{(x,y) \in K \cap E_n} \left|f_n(x,y) - f(x,y) \right| = 0.
\end{equation*}
\end{definition}

\begin{definition}[Condition 7.11 in \cite{FeKu06}] \label{definition:subLIM_superLIM}
Suppose that for each $n$ we have an operator $H_{n} \subseteq C_b(E_n) \times C_b(E_n)$. 
Let $(v_n)_{n \in \mathbb{N}^*}$ be a sequence of real numbers such that $v_n \uparrow \infty$.

\begin{enumerate}[(a)]
	\item The \textit{extended sub-limit}, denoted by $ex-\subLIM_n H_{n}$, is defined by the collection $(f,g) \in C_l(\bR^2)\times C_b(\bR)$ for which there exist $(f_n,g_n) \in H_{n}$ such that
	\begin{gather} 
	\LIM f_n \wedge c = f \wedge c, \qquad \forall \, c \in \bR, \label{eqn:convergence_condition_sublim_constants} \\
	\sup_n \frac{1}{v_n} \log \vn{g_n} < \infty, \qquad \sup_{n} \sup_{x \in \bR^2} g_n(x) < \infty, \label{eqn:convergence_condition_sublim_uniform_gn}
	\end{gather}
	and that, for every compact set $K \subseteq \bR^2$ and every sequence $z_n \in K$ satisfying $\lim_n z_n = z$ and $\lim_n f_n(z_n) = f(z) < \infty$, 
	\begin{equation} \label{eqn:sublim_generators_upperbound}
	\limsup_{n \uparrow \infty}g_{n}(z_n) \leq g(z).
	\end{equation}
	\item 
	The \textit{extended super-limit}, denoted by $ex-\superLIM_n H_{n}$, is defined by the collection $(f,g) \in C_u(\bR^2)\times C_b(\bR)$ for which there exist $(f_n,g_n) \in H_{n}$ such that
	\begin{gather} 
	\LIM f_n \vee c = f \vee c, \qquad \forall \, c \in \bR, \label{eqn:convergence_condition_superlim_constants} \\
	\sup_n \frac{1}{v_n} \log \vn{g_n} < \infty, \qquad \inf_{n} \inf_{x \in \bR^2} g_n(x) > - \infty, \label{eqn:convergence_condition_superlim_uniform_gn}
	\end{gather}
	and that, for every compact set $K \subseteq \bR^2$ and every sequence $z_n \in K$ satisfying $ \lim_n z_n = z$ and $\lim_n f_n(z_n) = f(z) > - \infty$,
	\begin{equation}\label{eqn:superlim_generators_lowerbound}
	\liminf_{n \uparrow \infty}g_{n}(z_n) \geq g(z).
	\end{equation}
\end{enumerate}
\end{definition}

For completeness, we also give the definition of the extended limit.

\begin{definition}\label{def:extended_limit}
Suppose that for each $n$ we have an operator $H_{n} \subseteq C_b(E_n) \times C_b(E_n)$. We write $ex-\LIM H_n$ for the set of $(f,g) \in C_b(\bR^2) \times C_b(\bR^2)$ for which there exist $(f_n,g_n) \in H_n$ such that $f = \LIM f_n$ and $g = \LIM g_n$.
\end{definition}

\begin{definition}[Viscosity solutions]\label{def:viscosity_solution}
Let $H_\dagger \subseteq C_l(\bR^2) \times C_b(\bR^2)$ and $H_\ddagger \subseteq C_u(\bR^2) \times C_b(\bR^2)$ and let $\lambda > 0$ and $h \in C_b(\bR^2)$. Consider the Hamilton-Jacobi equations
\begin{align}
f - \lambda H_\dagger f & = h, \label{eqn:differential_equation_dagger} \\
f - \lambda H_\ddagger f & = h. \label{eqn:differential_equation_ddagger}
\end{align}
We say that $u$ is a \textit{(viscosity) subsolution} of equation \eqref{eqn:differential_equation_dagger} if $u$ is bounded, upper semi-continuous and if, for every $f \in \cD(H_\dagger)$ such that $\sup_x u(x) - f(x) < \infty$ there exists a sequence  $x_n \in \mathbb{R}^2$ such that
\begin{equation*}
\lim_{n \uparrow \infty} u(x_n) - f(x_n)  = \sup_x u(x) - f(x),
\end{equation*}
and 
\begin{equation*}
\lim_{n \uparrow \infty} u(x_n) - \lambda H_\dagger f(x_n) - h(x_n) \leq 0.
\end{equation*}
We say that $v$ is a \textit{(viscosity) supersolution} of equation \eqref{eqn:differential_equation_ddagger} if $v$ is bounded, lower semi-continuous and if, for every $f \in \cD(H_\ddagger)$ such that $\inf_x v(x) - f(x) > - \infty$ there exists a sequence $x_n \in \mathbb{R}^2$ such that
\begin{equation*}
\lim_{n \uparrow \infty} v(x_n) - f(x_n)  = \inf_x v(x) - f(x),
\end{equation*}
and 
\begin{equation*}
\lim_{n \uparrow \infty} v(x_n) - \lambda H_\ddagger f(x_n) - h(x_n) \geq 0.
\end{equation*}
We say that $u$ is a \textit{(viscosity) solution} of equations \eqref{eqn:differential_equation_dagger} and \eqref{eqn:differential_equation_ddagger} if it is both a subsolution to \eqref{eqn:differential_equation_dagger} and a supersolution to \eqref{eqn:differential_equation_ddagger}.

We say that \eqref{eqn:differential_equation_dagger} and \eqref{eqn:differential_equation_ddagger} satisfy the \textit{comparison principle} if for every subsolution $u$ to \eqref{eqn:differential_equation_dagger} and supersolution $v$ to \eqref{eqn:differential_equation_ddagger}, we have $u \leq v$.
\end{definition}

Note that the comparison principle implies uniqueness of viscosity solutions. This in turn implies that a new Hamiltonian can be constructed based on the set of viscosity solutions.

\begin{condition} \label{condition:H_limit}
Suppose we are in the setting of Assumption \ref{assumption:LDP_assumption}. Suppose there are operators $H_\dagger \subseteq C_l(\bR^2) \times C_b(\bR^2)$, $H_\ddagger \subseteq C_u(\bR^2) \times C_b(\bR^2)$ and $H \subseteq C_b(\bR) \times C_b(\bR)$ with the following properties:
\begin{enumerate}[(a)]
\item \label{condition:H_limit:item_1} 
$H_\dagger \subseteq ex-\subLIM_n H_n$ and $H_\ddagger \subseteq ex-\superLIM_n H_n$ (recall Definition~\ref{definition:subLIM_superLIM}).
\item \label{condition:H_limit:item_2} 
The domain $\cD(H)$ contains $C^\infty_c(\mathbb{R})$ and, for $f \in C_c^\infty(\bR)$, we have $Hf(x) = H(x,\nabla f(x))$.
\item \label{condition:H_limit:item_3}
For all $\lambda > 0$ and $h \in C_b(\bR)$, every subsolution to $f - \lambda H_\dagger f = h$ is a subsolution to $f - \lambda H f = h$ and every supersolution to $f - \lambda H_\ddagger f = h$ is a supersolution to $f - \lambda H f = h$.
\end{enumerate}
\end{condition}

Now we are ready to state the main result of this appendix: the large deviation principle for the projected process. We denote by $\eta_n: E_n \to \mathbb{R}$ the projection map $\eta_n(x,y)=x$.

\begin{theorem}[Large deviation principle] \label{theorem:Abstract_LDP}
Suppose we are in the setting of Assumption \ref{assumption:LDP_assumption} and Condition \ref{condition:H_limit} is satisfied. Suppose that for all $\lambda > 0$ and $h \in C_b(\mathbb{R})$ the comparison principle holds for \mbox{$f - \lambda H f = h$}.   

Let $Z_n(t)$ be the solution to the martingale problem for $A_n$. Suppose that the large deviation principle at speed $(r_n)_{n \in \mathbb{N}^*}$ holds for $\eta_n(Z_n(0))$ on $\bR$ with good rate-function $I_0$. Additionally suppose that the exponential compact containment condition holds at speed $(r_n)_{n \in \mathbb{N}^*}$ for the processes $Z_n(t)$.

Then the large deviation principle holds with speed $(r_n)_{n \in \mathbb{N}^*}$  for $(\eta_n(Z_n(t)))_{n \in \mathbb{N}^*}$ on $D_{\mathbb{R}}(\bR^+)$ with good rate function $I$. Additionally, suppose that the map $p \mapsto H(x,p)$ is convex and differentiable for every $x$ and that the map $(x,p) \mapsto \frac{\dd}{\dd p} H(x,p)$ is continuous. Then the rate function $I$ is given by
\begin{equation*}
I(\gamma) = \begin{cases}
I_0(\gamma(0)) + \int_0^\infty \cL(\gamma(s),\dot{\gamma}(s)) \dd s & \text{if } \gamma \in \cA\cC, \\
\infty & \text{otherwise},
\end{cases}
\end{equation*}
where $\cL : \mathbb{R}^2 \rightarrow \bR$ is defined by $\cL(x,v) = \sup_p \left\{pv - H(x,p)\right\}$.
\end{theorem}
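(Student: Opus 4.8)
The plan is to follow the general program of Feng and Kurtz \cite{FeKu06}, in the projected form developed in \cite{CoKr18}. The argument splits into three components: exponential tightness of the projected trajectories, the large deviation upper and lower bounds obtained by sandwiching the limiting dynamics between viscosity sub- and supersolutions of $f-\lambda H f=h$, and the identification of the rate function as an action integral.

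\emph{Exponential tightness.} The exponential compact containment condition (Definition~\ref{definition:compact_containment}) for $(Z_n(t))$ on $E_n$, together with the exponential tightness of $\eta_n(Z_n(0))$ that is implicit in the assumed large deviation principle with good rate function $I_0$, is to be upgraded to exponential tightness of $(\eta_n(Z_n(t)))$ in $D_{\mathbb{R}}(\mathbb{R}^+)$. The mechanism is the standard one: for $(f_n,g_n)\in H_n$ the process $t\mapsto \exp\{r_n(f_n(Z_n(t))-f_n(Z_n(0))-\int_0^t g_n(Z_n(s))\,\dd s)\}$ is a (super)martingale, and one applies this to functions $f_n$ that approximate elements of $\cD(H_\dagger)$ depending on $x$ only; the compact containment controls the one-time marginals while the martingale estimate controls the path oscillation, and together they give exponential tightness in the Skorokhod topology (Feng--Kurtz \cite[Ch.~4]{FeKu06}).

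\emph{From the comparison principle to the LDP.} Since the comparison principle holds for $f-\lambda H f=h$, the map $h\mapsto R(\lambda)h:=$ (the unique viscosity solution) is a well-defined, single-valued pseudo-resolvent, so a Crandall--Liggett type exponential formula produces a limiting semigroup $V(t)$. The abstract machinery then runs as follows. For $(f,g)\in H_\dagger$ one chooses $f_n\in\cD(H_n)$ as in Definition~\ref{definition:subLIM_superLIM}(a); exploiting the one-sided controls \eqref{eqn:convergence_condition_sublim_uniform_gn}--\eqref{eqn:sublim_generators_upperbound} and the supermartingale above, every subsequential limit of the log-Laplace functionals of $\eta_n(Z_n(\cdot))$ is shown to be a viscosity subsolution of $f-\lambda H_\dagger f=h$, hence---by Condition~\ref{condition:H_limit}\eqref{condition:H_limit:item_3}---of $f-\lambda H f=h$. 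Symmetrically, $H_\ddagger\subseteq ex-\superLIM_n H_n$ yields viscosity supersolutions of $f-\lambda H f=h$. The comparison principle forces these to coincide with the unique viscosity solution, which pins down the limit of the log-Laplace functionals; combined with the LDP for $\eta_n(Z_n(0))$ and the exponential tightness, this gives the path-space large deviation principle for $(\eta_n(Z_n(t)))$ on $D_{\mathbb{R}}(\mathbb{R}^+)$ with rate function $I$ described through the semigroup $V(t)$ (equivalently, through the projective limit of finite-dimensional variational expressions built from $R(\lambda)$), exactly as in \cite[Ch.~7--8]{FeKu06}.

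\emph{Variational form of $I$ and the main obstacle.} Under the extra hypotheses that $p\mapsto H(x,p)$ is convex and differentiable and $(x,p)\mapsto\partial_p H(x,p)$ is continuous, the Legendre transform $\mathcal{L}(x,v)=\sup_p\{pv-H(x,p)\}$ is a well-defined lower semicontinuous Lagrangian, and a control-theoretic argument (Feng--Kurtz \cite[Ch.~8]{FeKu06}, with the refinements of \cite{Kr16b}) identifies the $V(t)$-defined rate function with $I_0(\gamma(0))+\int_0^\infty\mathcal{L}(\gamma(s),\dot\gamma(s))\,\dd s$ on $\mathcal{AC}$ and $+\infty$ otherwise; goodness of $I$ then follows from the exponential tightness. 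The technically delicate point---and the very reason the two Hamiltonians $H_\dagger,H_\ddagger$ and the domain extension in Condition~\ref{condition:H_limit}\eqref{condition:H_limit:item_3} are needed---is that $(Z_n)$ is controlled only on $\mathbb{R}^2$, whereas $H$ acts on functions of $x$ alone, so the fast coordinate $y$ must be averaged out; one must verify that the weakened, one-sided operator convergence (with the truncations $\wedge c$, $\vee c$ and with merely semicontinuous limit functions in $C_l(\mathbb{R}^2)$, $C_u(\mathbb{R}^2)$) is still strong enough to pass, in the limit, to viscosity sub/supersolutions of the $x$-only equation. Since this projected version of the Feng--Kurtz theorem has already been set up in \cite{CoKr18}, the proof ultimately reduces to checking that Assumption~\ref{assumption:LDP_assumption} and Condition~\ref{condition:H_limit} match the hypotheses of the corresponding result there.
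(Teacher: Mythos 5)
Your proposal follows essentially the same route as the paper: the result is obtained by invoking the Feng--Kurtz machinery (the paper cites \cite[Cor.~8.28]{FeKu06} directly, with $\bfH_\dagger = \bfH_\ddagger = H$, noting the Hamiltonian-type conditions are verified in \cite[Sect.~10.3]{FeKu06} and \cite{CoKr17}), and you correctly conclude that everything reduces to matching Assumption~\ref{assumption:LDP_assumption} and Condition~\ref{condition:H_limit} against the hypotheses of that abstract result. Your more detailed sketch of the internals (exponential tightness via supermartingale estimates, resolvent/semigroup construction from the comparison principle, Legendre-transform identification of the Lagrangian) is a faithful summary of what lies inside the cited theorem, but it is not a new argument relative to the paper.
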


\begin{proof}
The large deviation result follows by \cite[Cor.~8.28]{FeKu06} with $H_\dagger$ and $H_\ddagger$ as in the present paper and $\bfH_\dagger = \bfH_\ddagger = H$. The verification of the conditions for \cite[Thm.~8.27]{FeKu06} corresponding to a Hamiltonian of this type have been carried out in e.g. \cite[Sect.~10.3]{FeKu06} or \cite{CoKr17}.
\end{proof}

\subsection{Relating two sets of Hamiltonians}

For Condition \ref{condition:H_limit}, we need to relate the Hamiltonians $H_\dagger \subseteq C_l(\bR^2) \times C_b(\bR^2)$ and $H_\ddagger \subseteq C_u(\bR^2) \times C_b(\bR^2)$ to $H \subseteq C_b(\bR) \times C_b(\bR)$. 

\begin{definition}
Let $H_\dagger \subseteq C_l(\bR^2) \times C_b(\bR^2)$ and $H_\ddagger \subseteq C_u(\bR^2) \times C_b(\bR^2)$. We say that $\hat{H}_\dagger \subseteq  C_l(\bR^2) \times C_b(\bR^2)$ is a \textit{viscosity sub-extension} of $H_\dagger$ if $H_\dagger \subseteq \hat{H}_\dagger$ and if for every $\lambda >0$ and $h \in C_b(\bR^2)$ a viscosity subsolution to $f-\lambda H_\dagger f = h$ is also a viscosity subsolution to $f - \lambda \hat{H}_\dagger f = h$. Similarly, we define a \textit{viscosity super-extension} $\hat{H}_\ddagger$ of $H_\ddagger$.
\end{definition}

The following lemma allows us to obtain viscosity extensions.

\begin{lemma}[Lemma 7.6 in \cite{FeKu06}] \label{lemma:extension_results}
Let $H_\dagger \subseteq \hat{H}_\dagger \subseteq C_l(\bR^2) \times C_b(\bR^2)$ and $H_\ddagger \subseteq \hat{H}_\ddagger \subseteq C_u(\bR^2) \times C_b(\bR^2)$.

\smallskip

Suppose that for each $(f,g) \in \hat{H}_\dagger$ there exist $(f_n,g_n) \in H_\dagger$ such that, for every $c,d \in \bR$, we have
\begin{equation*}
\lim_{n \uparrow \infty} \vn{f_n \wedge c - f \wedge c} = 0
\end{equation*}
and 
\begin{equation*}
\limsup_{n \uparrow \infty} \sup_{z :  f(\gamma(z)) \vee f_n(\gamma(z)) \leq c} g_n(z) \vee d - g(z) \vee d \leq 0.
\end{equation*}
Then $\hat{H}_\dagger$ is a sub-extension of $H_\dagger$.

\smallskip

Suppose that for each $(f,g) \in \hat{H}_\ddagger$ there exist $(f_n,g_n) \in H_\ddagger$ such that, for every $c,d \in \bR$, we have
\begin{equation*}
\lim_{n \uparrow \infty} \vn{f_n \vee c - f \vee c} = 0
\end{equation*}
and 
\begin{equation*}
\liminf_{n \uparrow \infty} \inf_{z :  f(\gamma(z)) \wedge f_n(\gamma(z)) \geq c} g_n(z) \wedge d - g(z) \wedge d \geq 0.
\end{equation*}
Then $\hat{H}_\ddagger$ is a super-extension of $H_\ddagger$.
\end{lemma}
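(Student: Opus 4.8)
The plan is to prove the sub-extension assertion; the super-extension assertion is entirely symmetric (replace $\wedge$ by $\vee$, $\sup$ by $\inf$, upper semicontinuity by lower semicontinuity), and can in fact be deduced from the first part by applying it to $-u$, $-f$, $-g$, $-h$, after checking that negation sends a supersolution of $f - \lambda H_\ddagger f = h$ to a subsolution of the correspondingly reflected equation. So I would fix $\lambda > 0$, $h \in C_b(\bR^2)$, a viscosity subsolution $u$ of $f - \lambda H_\dagger f = h$, and a pair $(f,g) \in \hat{H}_\dagger$; when $(f,g) \in H_\dagger$ the required subsolution inequality at $(f,g)$ is already part of the definition of $u$ being a subsolution for $H_\dagger$, so the content lies in the pairs $(f,g) \in \hat{H}_\dagger \setminus H_\dagger$. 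In all cases one must show: whenever $M := \sup_z (u(z) - f(z)) < \infty$, there is a sequence $z_n$ with $u(z_n) - f(z_n) \to M$ and $\limsup_n \big( u(z_n) - \lambda g(z_n) - h(z_n) \big) \leq 0$. The strategy is to transplant these two properties from the approximating pairs $(f_n, g_n) \in H_\dagger$ provided by the hypothesis, via a diagonal argument.

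Concretely, I would proceed as follows. First, take $(f_n,g_n) \in H_\dagger$ with $\vn{f_n \wedge c - f \wedge c} \to 0$ for every $c$ and $\limsup_n \sup\{ g_n(z) \vee d - g(z)\vee d : f(\gamma(z)) \vee f_n(\gamma(z)) \leq c \} \leq 0$ for every $c,d$. Since $u$ is bounded and each $f_n \in C_l(\bR^2)$ is bounded below, $M_n := \sup_z (u(z) - f_n(z)) < \infty$, so the subsolution property of $u$ for $H_\dagger$ applies to each $f_n$ and produces, for each $n$, a sequence along which $u - f_n \to M_n$ and $\limsup (u - \lambda g_n - h) \leq 0$. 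Second, I would check that $M_n \to M$: if $c$ is chosen so large that $\{u - f > M - 1\} \subseteq \{f < c\}$ (possible because boundedness of $u$ forces $f$ to be bounded on the region where $u - f$ is near $M$), then $f_n \to f$ uniformly there, giving $\liminf_n M_n \geq M$; and for $c$ large one has $f_n \geq f_n \wedge c \geq f \wedge c - o(1)$ everywhere, so $u - f_n \leq M + o(1)$ and $\limsup_n M_n \leq M$. Third, a diagonal extraction: pick $z_n$ with $u(z_n) - f_n(z_n) \geq M_n - n^{-1}$ and $u(z_n) - \lambda g_n(z_n) - h(z_n) \leq n^{-1}$. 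Then $u(z_n) - f_n(z_n) \to M$, so by boundedness of $u$ the values $f_n(z_n)$ lie in a fixed bounded interval, and the truncated convergence then forces $f(z_n) - f_n(z_n) \to 0$; hence $u(z_n) - f(z_n) \to M = \sup_z(u - f)$, so $(z_n)$ is an asymptotically maximizing sequence for $u - f$. Finally, for the equation I would write $u(z_n) - \lambda g(z_n) - h(z_n) \leq n^{-1} + \lambda(g_n(z_n) - g(z_n))$; since $g \in C_b$, choosing $d \leq \inf g$ gives $g(z_n) \vee d = g(z_n)$ while $g_n(z_n) \leq g_n(z_n) \vee d$, and since $f(z_n), f_n(z_n)$ are bounded, $z_n$ lies in $\{z : f(\gamma(z)) \vee f_n(\gamma(z)) \leq c\}$ for some fixed $c$ and all large $n$; therefore $g_n(z_n) - g(z_n) \leq \sup\{ g_n(z) \vee d - g(z) \vee d : f(\gamma(z)) \vee f_n(\gamma(z)) \leq c \}$, and the hypothesis gives $\limsup_n$ of the right-hand side $\leq 0$. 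Combining the last two bounds yields $\limsup_n ( u(z_n) - \lambda g(z_n) - h(z_n) ) \leq 0$, which is what was needed.

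The only delicate point, and the one I would single out as the main obstacle, is that the convergence $f_n \to f$ holds only in the \emph{truncated} sense $\vn{f_n \wedge c - f \wedge c} \to 0$ — because $f$ and the $f_n$ may be unbounded above — so the two transfer steps ($M_n \to M$ and $f(z_n) - f_n(z_n) \to 0$) cannot be handled by a naive uniform-convergence argument; they need the boundedness of $u$ combined with the $\wedge c$ bookkeeping indicated above. Everything else — the diagonal extraction and the $\vee d$ device that is tailor-made for the second hypothesis — is routine.
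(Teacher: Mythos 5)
Your proof is correct, but note that the paper itself gives no proof of this lemma: it is quoted verbatim as Lemma~7.6 from the Feng--Kurtz monograph \cite{FeKu06} and used as a black box, so there is no in-paper argument to compare against. Your argument is the standard one and matches the mechanism underlying the Feng--Kurtz proof: use the approximating pairs $(f_n,g_n)\in H_\dagger$ to produce, via the $H_\dagger$-subsolution property of $u$ and a diagonal extraction, a sequence $z_n$ that is nearly maximizing for $u-f_n$; then transfer both the ``nearly maximizing'' property and the subsolution inequality to $(f,g)$ using the truncated convergence $\vn{f_n\wedge c - f\wedge c}\to 0$ and the $\vee d$ device. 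The two delicate steps you flag are handled correctly: $M_n\to M$ follows because boundedness of $u$ confines near-maximizers of $u-f$ to a sublevel set $\{f<c\}$, on which $f_n\wedge c\to f\wedge c$ forces eventual pointwise convergence of $f_n$ to $f$; and at the extracted points $z_n$, the values $f(z_n),f_n(z_n)$ remain in a fixed bounded interval, which is exactly what allows the restriction to $\{f\vee f_n\le c\}$ needed to invoke the $\limsup$-hypothesis on $g_n\vee d - g\vee d$ with $d\le\inf g$. One small imprecision: ``$f_n\to f$ uniformly there'' is not literally implied by $\vn{f_n\wedge c-f\wedge c}\to 0$ on all of $\{f<c\}$, but it does hold once $c$ is enlarged slightly, and in any case only pointwise convergence at a near-maximizer of $u-f$ is used, which follows directly. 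Your remark that the super-extension half is obtained by ``reflection'' rather than literal negation of the first half is accurate; it is a repetition of the same argument with the obvious sign/order reversals, not a formal corollary.
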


\smallskip

\textbf{Acknowledgements}
The authors are grateful to the anonymous referees who pointed out important points which led to an improvement of the paper.
FC was supported by The Netherlands Organisation for Scientific Research (NWO) via TOP-1 grant 613.001.552 and by the French foundation {\em Fondation Sciences Ma\-th{\'e}\-ma\-ti\-ques de Paris}. Part of this work was done during FC's stay at the Institut Henri Poincar{\'e} -- Centre {\'E}mile Borel during the trimester {\em Stochastic Dynamics Out of Equilibrium}. FC thanks this institution for hospitality. 
MG was supported by the French foundation \emph{Fondation Math\'ematique Jacques Hadamard} and partially by the Dutch cluster {\em Stochastics -- Theoretical and Applied Research} (STAR). 
RK was supported by the Deutsche Forschungsgemeinschaft (DFG) via RTG 2131 High-dimensional Phenomena in Probability – Fluctuations and Discontinuity.

\bibliographystyle{abbrv} 
\bibliography{../MDP_SOC_CW_biblio}{}
\end{document}